\title{\vspace{-1cm}On the edge-density of the Brownian co-graphon\\ and common ancestors of pairs in the CRT
}
\author{Guillaume Chapuy%
\thanks{Université Paris Cité, CNRS, IRIF, F-75013, Paris, France
Email:~{\tt guillaume.chapuy@irif.fr}.
 G.C. acknowledges funding from the grants ANR-19-CE48-0011 ``COMBINÉ'' and ANR-18-CE40-0033  ``Dimers''. Thanks to Lucas Gerin for a talk at IRIF presenting the results of~\cite{bbfgmp} and for interesting discussions. 
}
}
\date{\today}
\theoremstyle{theorem}
\newtheorem{theorem}{Theorem}
\newtheorem{proposition}[theorem]{Proposition}
\newtheorem{corollary}[theorem]{Corollary}
\newtheorem{lemma}[theorem]{Lemma}
\newtheorem{claim}[theorem]{Claim}
\numberwithin{theorem}{section}
\theoremstyle{definition}
\newtheorem{remark}[theorem]{Remark}
\newcommand{\Cat}[1]{\mathcal{C}_{#1}}
\begin{document}

\maketitle

\begin{abstract} 
	Bassino et al.~\cite{bbfgmp}
	have shown that uniform random co-graphs (graphs without induced $P_4$) of size $n$ converge to a certain non-deterministic graphon. The edge-density of this graphon is a random variable $\Lambda \in [0,1]$ whose first moments have been computed by these authors.
	The first purpose of this note is to observe that, in fact, these moments can be computed by a simple recurrence relation. 

	The problem leads us to the following question of independent interest: given $k$ i.i.d. uniform pairs of points $(X_1,Y_1),$ $\dots$, $(X_k,Y_k)$ in the Brownian CRT, what is the size $S_k$ of the set $\{X_i\wedge Y_i, i=1\dots k\}$ formed by their pairwise last common ancestors?
We show that ${S_k}\sim c{\sqrt{k}\log k}$ in probability, with  $c=(\sqrt{2\pi})^{-1}$.

	The method to establish the recurrence relation is reminiscent of Janson's computation of moments of the Wiener index of (large) random trees. The logarithm factor in the convergence result comes from the estimation of Riemann sums in which summands are weighted by the integer divisor function -- such sums naturally occur in the problem.

	We have not been able to analyse the asymptotics of moments of $\Lambda$ directly from the recurrence relation, and in fact our study of $S_k$ is independent from it.
	Several things remain to be done, in particular we only scratch the question of large deviations of $S_k$, and the precise asymptotics of moments of $\Lambda$ is left open.




\end{abstract}

\section{Introduction and main results}

A \emph{cograph}  of size $n$ is a graph on $[n]$ (no loops nor multiple edges) which avoids the path $P_4$ as an induced subgraph. The class of cographs has a simple recursive decomposition in terms of \emph{join} or \emph{disjoint union} operations -- this a special case of the modular decomposition of a graph. This tree-like structure enabled the authors of~\cite{bbfgmp} to show that cographs of size $n$ taken uniformly at random converge, when $n$ goes to infinity, to a certain non-deterministic graphon that they denote $\mathbf{W}^{1/2}$. We refer to~\cite{lovasz:largeNetworks} for basics on graphons and their convergence. 

The present note requires no knowledge on graphons, and results will be intelligible in terms of the Brownian excursion or random trees, but let us still describe quickly the graphon $\mathbf{W}^{p}$, which is defined for any $p\in [0,1]$. Let $\mathbf{e}$ be a normalized Brownian excursion on $[0,1]$. For $x,y\in [0,1]$, $x\neq y$, we let $m^{\mathbf{e}}_{x,y}$ be a value $t$ in the interval $[\min(x,y),\max(x,y)]$ at which the minimum of $\mathbf{e}_t$ is attained. This value is uniquely defined almost surely for almost all $x,y$, which is good enough for our purposes. Now, assume that for each $t\in [0,1]$ which is one of these values (there are countably many, a.s.), one is given a Bernoulli random variable $B_t$ of parameter $p$ (so $B_t$ is equal to 0 or 1 with probability $(1-p)$ and $p$ respectively), such that these variables are altogether independent, and independent of $\mathbf{e}$.
Then the (random) graphon $\mathbf{W}^{p}$ is defined \cite[Definition 4.2]{bbfgmp} as the (graphon equivalence class of the) (random)  function
\begin{align*}
	\mathbf{w}^p:	[0,1]^2 &\longrightarrow \{0,1\} \\
	(x,y) &\longmapsto B_{m^{\mathbf{e}}_{x,y}}.
\end{align*}
Readers unfamiliar with graphons can just think of $\mathbf{w}^p$ as the "continuum adjacency matrix" which is to the graphon $\mathbf{W}^p$ what the adjacency matrix would be to a finite graph. 

In term of the Brownian Continuum Random Tree (CRT, see e.g.~\cite{legall:treeSurvey}) we may think of this object as follows. The Brownian excursion $\mathbb{e}$ defines a continuum tree $\mathcal{T}_\mathbf{e}$ together with a canonical projection $\pi_{\mathbf{e}}: [0,1] \rightarrow \mathcal{T}_\mathbf{e}$. We assume that all the "nodes" of $\mathcal{T}_\mathbf{e}$ are decorated by a Bernoulli variable of parameter~$p$. Then for $x,y \in [0,1]$, the function $\mathbf{w}^p$ associates to the pair of vertices $x,y$ of $\mathcal{T}_\mathbf{e}$ the Bernoulli variable decorating their last common ancestor $x\wedge y := \pi_{\mathbf{e}}(m^\mathbf{e}_{x,y})$. Of course, different pairs of vertices can share the same last common ancestor, which creates dependencies and makes the model interesting.

The authors of~\cite{bbfgmp} proved that, if $K_n$ is a  cograph on $[n]$ taken uniformly at random, then
$$
K_n \longrightarrow \mathbf{W}^{1/2},
$$
in law, for the topology of graphon convergence. In particular, for any fixed graph $H$, one has the convergence (in law) of subgraph densities: 
$$
\mathrm{Dens}(H,K_n) \longrightarrow \mathrm{Dens}(H,\mathbf{W}^{1/2}).
$$
In this note, we are interested in the case where $H$ is a single edge. We consider the case of arbitrary $p$ (which at the discrete level is related to a biased model of random cographs, see~\cite{bbfgmp}). We use the shortcut notation 
$$\Lambda_p:= \mathrm{Dens}(\mathrm{edge},\mathbf{W}^{p}):=\int_{[0,1]^2}\mathbf{w}^{p}.$$
Thus $\Lambda_{1/2}$ is the (random) edge density of the graphon $\mathbf{W}^{1/2}$, or equivalently the limit in distribution of the random variable $|E(K_n)|/{n\choose 2}$.

Moments of $\Lambda_p$ can be studied as follows.
Let $X_1,Y_1,\dots,X_k,Y_k$ be i.i.d. points chosen according to the Lebesgue measure on the CRT $\mathcal{T}_\mathbf{e}$, and let $Z_i=X_i \wedge Y_i$ be the last common ancestor of the $i$-th pair.
Let $B_i$ be the Bernoulli variable sitting at $Z_i$.
By definition of the graphon $\mathbf{W}^p$ (see~\cite{bbfgmp} for details), one has 
\begin{align}\label{eq:deflambdak}
\mathbf{E}\Lambda_p^k = \mathbf{P}(B_1=\dots=B_k=1).
\end{align}
Equivalently, 
$\mathbf{E}\Lambda_p^k = \mathbf{E}p^{S_k},$
where 
$$S_k := |\mathcal{S}_k| \ \ , \ \ \mathcal{S}_k:=\{Z_1,\dots,Z_k\}.$$
In words,  $S_k$ counts the number of distinct last common ancestors of the pairs $(X_1,Y_1)$, $\dots$, $(X_k,Y_k)$, i.e.  the number of distinct Bernoulli variables appearing in~\eqref{eq:deflambdak}. See Figure~\ref{fig:rec}-Left.

Our first observation is the following formula. We use the notation $(2n-1)!!=(2n-1)(2n-3)\dots 1$ for the double factorial, with $(-1)!!:=1$.
\begin{theorem}\label{thm:main}
	Let $a_k(p) := \mathbf{E}\Lambda_p^k= \mathbf{E}p^{S_k}$. Then one has $a_0(p)=1$ and, for $k\geq 1$,
\begin{align}\label{eq:mainrec}
	a_k(p)
	=&
	\sum_{k_1+k_2+k_3=k, k_i\geq 0\atop{k_1+k_3>0 \mbox{\tiny ~and~} k_2+k_3>0}}
	a_{k_1}(p)a_{k_2}(p)\frac{k!}{k_1!k_2!k_3!} 
	\frac{(4k_1+2k_3-3)!!(4k_2+2k_3-3)!!2^{k_3-1}}{(4k-3)!!} p^{\mathbf{1}_{k_3\neq 0}}.
\end{align}
\end{theorem}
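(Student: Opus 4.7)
The plan is to exploit Aldous's theorem on the distribution of the reduced subtree of the CRT spanned by i.i.d.\ uniform points, and then to carry out a one-step decomposition at the ``first branching'' encountered from the root. Let $R$ denote the reduced subtree spanning the $2k$ sample points together with the CRT root $\rho$. By the uniform re-rooting invariance of the CRT, $\rho$ may be treated as an additional uniform point, so Aldous's theorem gives that the combinatorial shape of $R$ is uniform over the $(4k-3)!!$ unrooted binary trees with $2k+1$ labeled leaves. Equivalently, after forgetting $\rho$ and rooting at its unique neighbour in $R$, the reduced tree on the $2k$ labeled leaves $\{X_i,Y_i\}_{i=1}^k$ is a uniform rooted binary tree with root $v^\ast$ (the first branching met when entering the CRT from $\rho$), again with $(4k-3)!!$ possible shapes.

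The next step is to decompose at $v^\ast$ into the two subtrees $T_L,T_R$. Classify each pair $(X_i,Y_i)$ as \emph{complete-left}, \emph{complete-right} or \emph{split} according as both endpoints lie in $T_L$, both in $T_R$, or one in each, and write $(k_1,k_2,k_3)$ for the respective counts, so that $k_1+k_2+k_3=k$ with $k_1+k_3\ge 1$ and $k_2+k_3\ge 1$. The geometric key is that every split pair has $X_i\wedge Y_i=v^\ast$, so these pairs contribute a single new element to $\mathcal{S}_k$ exactly when $k_3\ge 1$; for a complete pair the LCA in the full CRT coincides with the LCA inside the corresponding subtree rooted at $v^\ast$. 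Consequently
\[
S_k \;=\; S_L + S_R + \mathbf{1}_{k_3\neq 0},
\]
where $S_L,S_R$ are conditionally independent given the decomposition type, because by the branching property of the CRT the subtrees $T_L,T_R$ are (after rescaling by their masses) independent copies of the CRT containing i.i.d.\ uniform points.

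Applying Aldous inside $T_L$ and $T_R$, and using that the restriction of a uniform rooted binary tree to any subset of its labels is again uniform (so marginalising out the $k_3$ ``half-pair'' leaves on each side is harmless), one gets $\mathbf{E}[p^{S_L}\mid(k_1,k_2,k_3)]=a_{k_1}(p)$ and similarly for $S_R$. It remains to compute the probability of the decomposition: under the uniform law on the $(4k-3)!!$ rooted binary trees with $2k$ labeled leaves, the number of trees realising the type $(k_1,k_2,k_3)$ equals
\[
\frac{k!}{k_1!k_2!k_3!}\cdot 2^{k_3}\cdot (4k_1+2k_3-3)!!\cdot(4k_2+2k_3-3)!!\cdot\tfrac12,
\]
where the factors correspond, respectively, to the category assignment of the $k$ pairs, the left/right orientation of each split pair, the two subtree shapes, and the unordered pair of children of $v^\ast$. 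Dividing by $(4k-3)!!$ and summing $\mathbf{E}[p^{S_k}]=\sum \mathbf{P}(\text{decomp})\,a_{k_1}(p)a_{k_2}(p)\,p^{\mathbf{1}_{k_3\neq 0}}$ produces~\eqref{eq:mainrec}.

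The delicate step is not combinatorial but probabilistic: one must check that conditioning on the decomposition type at $v^\ast$ preserves Aldous's shape law inside each subtree. This amounts to two facts, both of which are standard once isolated: the marginalisation property of the uniform distribution on rooted binary trees (any induced subtree on a subset of labels is again uniformly distributed), and the branching property of the CRT (subtrees above a branching point are, after rescaling by their masses, independent copies of the CRT). Once these two ingredients are in hand, the remainder is bookkeeping of the factors above.
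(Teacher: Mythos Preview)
Your proof is correct and follows essentially the same approach as the paper: both decompose at the first branching $v^\ast$ below the root, classify the $k$ pairs as left/right/split, and use $|S(T)|=|S_L|+|S_R|+\mathbf{1}_{k_3\neq 0}$. The only cosmetic difference is that the paper works with \emph{ordered} planted binary trees and uses R\'emy's construction to count the attachments of the $k_3$ half-pair leaves to a uniform tree in $\mathbb{T}_{k_i}$, whereas you work with unordered trees and invoke the dual fact (consistency/marginalisation of the uniform binary tree) to remove those leaves; these are two sides of the same bijection, and the resulting factors $(4k_i+2k_3-3)!!$ agree.
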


\medskip

Although the original motivation of this note was to write down this recurrence formula, it led us to studying the variable $S_k$ in itself, leading to the following results whose proofs occupy most of the paper.
\begin{theorem}\label{thm:expectSk}
	One has, in probability (and in fact, in expectation, and in $L^2$), when $k\longrightarrow \infty$,
	$$\frac{S_k}{\sqrt{k}\log k} \longrightarrow \frac{1}{\sqrt{2\pi}}.$$
\end{theorem}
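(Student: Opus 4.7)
The plan is to compute $\mathbf{E}[S_k]$ asymptotically and then bound $\mathrm{Var}(S_k)$ to obtain $L^2$ (hence in-probability) convergence. The starting point is a combinatorial reduction using Aldous's theorem: the subtree $T$ of the CRT spanned by $X_1,Y_1,\dots,X_k,Y_k$ has shape uniform among binary trees with $2k$ labeled leaves, and, conditionally on $T$, the pairing $\{X_i,Y_i\}_{i=1}^{k}$ corresponds to a uniform random perfect matching $\sigma$ on the leaves. For an internal node $v$ of $T$ with child subtrees of sizes $(a_v,b_v)$, let $f_v$ denote the number of $\sigma$-pairs with one leaf in each child subtree; then $v\in\mathcal{S}_k$ iff $f_v\geq 1$, so $S_k=\sum_v\mathbf{1}[f_v\geq 1]$.

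For the first moment, by inclusion-exclusion over bipartite matchings between the two child subtrees,
\[
\mathbf{P}(f_v=0\mid T)=\sum_{s\geq 0}(-1)^s\binom{a_v}{s}\binom{b_v}{s}s!\,\frac{(2k-2s-1)!!}{(2k-1)!!}\;\approx\;\exp\!\bigl(-a_vb_v/(2k)\bigr),
\]
the Poisson approximation being valid in the dominant regime $a_vb_v=O(k)$. The expected number $\mathbf{E}[Y_{a,b}]$ of internal nodes of $T$ with children of sizes $(a,b)$ has the standard Catalan expression $(2k-a-b+1)\,C_{a-1}C_{b-1}C_{2k-a-b}/C_{2k-1}\sim k/(2\pi(ab)^{3/2})$ as $a,b\to\infty$ with $a+b=o(k)$. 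Grouping terms with fixed product $ab=n$, and using that the number of ordered divisor pairs of $n$ is $d(n)$, one obtains
\[
\mathbf{E}[S_k]\;\sim\;\frac{k}{2\pi}\sum_{n\geq 1}\frac{d(n)\bigl(1-e^{-n/(2k)}\bigr)}{n^{3/2}}.
\]
The $\log k$ factor then appears through Dirichlet's classical estimate $\sum_{n\leq N}d(n)\sim N\log N$: Abel summation and the substitution $n=2ku$ evaluate the divisor sum asymptotically as $\log(2k)/\sqrt{2k}\cdot\int_0^\infty(1-e^{-u})u^{-3/2}\,du$, where the integral equals $2\sqrt{\pi}$, yielding $\mathbf{E}[S_k]\sim\sqrt{k}\log k/\sqrt{2\pi}$.

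For the $L^2$ statement I would decompose $\mathrm{Var}(S_k)=\mathrm{Var}(\mathbf{E}[S_k\mid\mathcal{T}_\mathbf{e}])+\mathbf{E}[\mathrm{Var}(S_k\mid\mathcal{T}_\mathbf{e})]$. Conditionally on the CRT, the $Z_i$'s are i.i.d.\ samples from the atomic LCA-distribution with masses $(p_v)_v$, and the indicators that ``atom $v$ is sampled'' have non-positive pairwise covariances (since $(1-p_v-p_w)^k\leq(1-p_v)^k(1-p_w)^k$); hence $\mathrm{Var}(S_k\mid\mathcal{T}_\mathbf{e})\leq\mathbf{E}[S_k\mid\mathcal{T}_\mathbf{e}]$, and taking expectation $\mathbf{E}[\mathrm{Var}(S_k\mid\mathcal{T}_\mathbf{e})]\leq\mathbf{E}[S_k]=o(k\log^2k)$. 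The remaining term $\mathrm{Var}(\mathbf{E}[S_k\mid\mathcal{T}_\mathbf{e}])$ is the main obstacle, requiring a second-moment calculation jointly for pairs $(v,w)$ of internal nodes of $T$: two-variable Catalan bookkeeping for the expected count of pairs with prescribed subtree sizes and ancestor relation, together with a joint Poisson approximation for $\mathbf{P}(f_v\geq 1,\,f_w\geq 1\mid T)$. The heuristic is that nodes in generic position decouple and the covariance contributions are $o(k\log^2k)$; carefully handling the correlations for nested pairs of nodes is what I expect to be the most technical part.
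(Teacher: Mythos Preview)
Your first-moment argument is essentially the paper's: the same Catalan asymptotics for the subtree-size distribution, the same inclusion--exclusion / Poisson approximation for the matching probability, and the same divisor-function mechanism producing the $\log k$. The paper packages the last step as a general sum-to-integral lemma (its Lemma~5.3 / Theorem~5.1) rather than Abel summation, but the substance is identical. One thing you gloss over is the contribution of pairs $(a,b)$ with $\min(a,b)$ bounded, where neither the Catalan asymptotic nor the Poisson approximation holds; the paper handles these boundary regimes via explicit domination bounds (its hypothesis~(ii)), and your write-up would need the same care to be rigorous.

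The second moment is where your proposal is genuinely incomplete. The negative-covariance observation for the conditional variance is correct and pleasant, but it only dispatches a term of order $\mathbf{E}[S_k]=O(\sqrt{k}\log k)$, which is already negligible compared to the target $o(k\log^2 k)$. The entire difficulty sits in $\mathrm{Var}\bigl(\mathbf{E}[S_k\mid\mathcal{T}_\mathbf{e}]\bigr)$, and you explicitly leave this as a heuristic. The paper does carry this out (Proposition~5.7): it computes $\mathbf{E}[S_k^2]$ via pairs of internal vertices, separates the ancestral case (negligible, order $K^{-1}\log K$) from the generic case, and for the latter proves a two-dimensional analogue of the divisor-sum lemma together with the factorisation $C(p_1,q_1,p_2,q_2,K)\sim(1-e^{-p_1q_1/K})(1-e^{-p_2q_2/K})$. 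This is the bulk of the work, and nothing in your decomposition avoids it.

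There is also a small conflation in your last paragraph: having conditioned on the full CRT $\mathcal{T}_\mathbf{e}$, the remaining term $\mathrm{Var}\bigl(\mathbf{E}[S_k\mid\mathcal{T}_\mathbf{e}]\bigr)$ is a second moment of $\sum_v(1-(1-p_v)^k)$ over \emph{all} branch points of the continuum tree, and involves only the joint law of pairs of branch-point masses --- not the joint matching probabilities $\mathbf{P}(f_v\geq 1,\,f_w\geq 1\mid T)$ you mention. If instead you want to work with the discrete skeleton $T$ and those matching probabilities, you are back to computing $\mathbf{E}[S_k^2]$ directly as the paper does, and the conditional-variance split has bought you nothing. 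Either route is viable, but they are different computations and you should commit to one.
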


\begin{theorem}\label{thm:deviationSk}
	There are positive constants $c,c'$  such that for $k\geq 1$,
\begin{align}\label{eq:lowerdev}
	\mathbf{P} (S_k<c\sqrt{k}) \leq \exp(-c' \sqrt{k}).
\end{align}
\end{theorem}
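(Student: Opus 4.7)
The plan is to lower-bound $S_k$ directly by a simple balls-into-bins construction on the excursion, bypassing the delicate ``clumping'' of LCAs that governs the precise asymptotics of Theorem~\ref{thm:expectSk}: the idea is to exhibit $\Theta(\sqrt{k})$ disjoint time-intervals of $[0,1]$, each of which forces the existence of a distinct LCA, and then to run two independent concentration arguments on nested scales.

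Divide $[0,1]$ into $m:=\lceil\sqrt{k}\rceil$ equal subintervals $I_1,\ldots,I_m$. Call the $i$-th pair \emph{localized in slot $j$} if both $X_i$ and $Y_i$ lie in $I_j$. For such a pair, $m^{\mathbf{e}}_{X_i,Y_i}\in I_j$, hence $Z_i$ projects into $I_j$; and since the branching points of $\mathbf{e}$ almost surely have pairwise distinct heights, two argmins in disjoint subintervals correspond to distinct points of $\mathcal{T}_{\mathbf{e}}$ a.s. So if $N$ denotes the number of slots that contain at least one localized pair, then $S_k\geq N$ almost surely, and it suffices to establish $\mathbf{P}(N<c\sqrt{k})\leq e^{-c'\sqrt{k}}$.

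To control $N$, first estimate the number $K$ of localized pairs. Since $\mathbf{P}(\text{pair $i$ is localized})=m\cdot(1/m)^2=1/m$, $K$ is $\mathrm{Bin}(k,1/m)$ with mean $k/m\in[\sqrt{k}/2,\sqrt{k}]$, so standard Chernoff bounds give $\mathbf{P}(K\notin[\sqrt{k}/4,2\sqrt{k}])\leq e^{-c_1\sqrt{k}}$. Conditionally on the subset of pairs that are localized, the slot of each localized pair is uniform on $\{1,\ldots,m\}$ and independent across pairs; so, given $K$, the variable $N$ is the number of non-empty bins when $K$ balls are thrown uniformly into $m$ bins. On the event $K\in[\sqrt{k}/4,2\sqrt{k}]$ one has $\mathbf{E}[N\mid K]=m(1-(1-1/m)^K)\geq c_2\sqrt{k}$, and $N$ is a function of the $K$ independent slot-choices with coordinate-Lipschitz constant $1$. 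McDiarmid's inequality then yields $\mathbf{P}(N<\mathbf{E}[N\mid K]-t\mid K)\leq \exp(-2t^2/K)$, and the choice $t=c_2\sqrt{k}/2$ together with $K\leq 2\sqrt{k}$ gives $\exp(-c'\sqrt{k})$. Combining with the Chernoff estimate on $K$ produces the theorem.

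The key point, and essentially the only real obstacle, is the dimensionality of the McDiarmid step: applied directly to $S_k$ as a function of the $k$ original pairs, bounded differences would only yield an $\exp(-(\log k)^2)$ tail, too weak for the statement. The choice $m=\lceil\sqrt{k}\rceil$ is calibrated precisely so that $\mathbf{E}[N]$ and the number of coordinates $K$ on which McDiarmid is applied are \emph{both} of order $\sqrt{k}$, which is what makes the exponent $\sqrt{k}$ (rather than $(\log k)^2$) appear.
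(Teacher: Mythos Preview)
Your argument is correct. It rests on the same basic observation as the paper---namely, that minima of the Brownian excursion on disjoint subintervals of $[0,1]$ are almost surely distinct as points of $\mathcal T_{\mathbf e}$---but it extracts the deviation bound by an entirely different (and more elementary) mechanism. The paper lower-bounds $S_k$ by the independence number $\alpha(G_k)$ of the random interval graph formed by the $k$ intervals $[\min(A_i,B_i),\max(A_i,B_i)]$, and then proves a tail bound for $\alpha(G_k)$ via a Poisson point process coupling and a renewal/greedy argument due to~\cite{intervalGraphs}. You instead fix a deterministic partition of $[0,1]$ into $\lceil\sqrt{k}\rceil$ slots and reduce to an occupancy problem, where Chernoff for $K$ and McDiarmid for $N$ finish the job directly. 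What your route buys is self-containment: no interval-graph machinery, no Poisson process, just binomial and bounded-differences concentration. What the paper's route buys is a sharper explicit constant (any $c<2/\sqrt{\pi}$, versus roughly $c\approx \tfrac12(1-e^{-1/4})$ in yours); neither constant is sharp for $S_k$ itself, of course, since $S_k$ is actually of order $\sqrt{k}\log k$.

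One cosmetic point: in the paper's notation the uniform time-coordinates are $A_i,B_i$, while $X_i,Y_i$ denote their images in the CRT; your proof uses $X_i,Y_i$ for the time-coordinates. The intent is clear, but you may want to align the notation.
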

\begin{corollary}\label{cor:asymptMoments}
	Let $p\in(0,1)$. There are constants $c,c',c''$ such that 
	\begin{align}\label{eq:asympt}
		\exp(-c\sqrt{k}\log k)\leq \mathbf{E}\Lambda_p^k \leq \exp(-c'\sqrt{k}),
	\end{align}
	and
	\begin{align}\label{eq:asympt2}
		\mathbf{P}(\Lambda_p <\epsilon) = \mathbf{P}(\Lambda_{1-p} >1-\epsilon)\leq \exp(-c'' \epsilon^{-1}).
	\end{align}
\end{corollary}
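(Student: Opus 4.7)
The plan is to derive all three inequalities from the identity $\mathbf{E}\Lambda_p^k=\mathbf{E}p^{S_k}$, combined with Theorems~\ref{thm:expectSk} and~\ref{thm:deviationSk}. Throughout, I let $c,c',c''$ denote positive constants (depending on $p$) whose values may change from line to line.

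For the \textbf{upper bound} in~\eqref{eq:asympt}, I split the expectation on the event of Theorem~\ref{thm:deviationSk}:
\begin{align*}
\mathbf{E}p^{S_k} = \mathbf{E}[p^{S_k};\,S_k\geq c\sqrt{k}] + \mathbf{E}[p^{S_k};\,S_k<c\sqrt{k}] \leq p^{c\sqrt{k}} + e^{-c'\sqrt{k}},
\end{align*}
and since $p<1$ both summands are of order $\exp(-\Omega(\sqrt{k}))$. For the \textbf{lower bound}, the function $x\mapsto p^x=e^{x\log p}$ is convex (with $\log p<0$), so Jensen's inequality yields $\mathbf{E}p^{S_k}\geq p^{\mathbf{E}S_k}=e^{\mathbf{E}S_k\log p}$. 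Combined with the $L^1$ estimate $\mathbf{E}S_k\sim\sqrt{k}\log k/\sqrt{2\pi}$ from Theorem~\ref{thm:expectSk}, and using again $\log p<0$, this gives $\mathbf{E}\Lambda_p^k\geq \exp(-c\sqrt{k}\log k)$.

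For the bound~\eqref{eq:asympt2}, I would first justify the distributional identity $\Lambda_p\stackrel{d}{=}1-\Lambda_{1-p}$ by coupling: if one builds the graphon using the Bernoullis $1-B_t$ (which are Bernoulli of parameter $1-p$), the resulting function is the pointwise complement of $\mathbf{w}^p$, so $\Lambda_p+\Lambda_{1-p}=1$ in this joint realization. This proves the first equality in~\eqref{eq:asympt2}. Markov's inequality applied to $\Lambda_{1-p}^k$, combined with the upper bound just obtained, then gives, for $\epsilon\leq 1/2$,
\begin{align*}
\mathbf{P}(\Lambda_p<\epsilon)=\mathbf{P}(\Lambda_{1-p}>1-\epsilon)\leq \frac{\mathbf{E}\Lambda_{1-p}^k}{(1-\epsilon)^k}\leq \exp\bigl(-c\sqrt{k}+2k\epsilon\bigr),
\end{align*}
where I used $-\log(1-\epsilon)\leq 2\epsilon$ on $[0,1/2]$. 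A Chernoff-style optimization with $k\approx c^2/(16\epsilon^2)$ (rounded to an integer) makes the exponent equal to $\approx-c^2/(8\epsilon)$, which yields~\eqref{eq:asympt2}.

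No step involves a serious obstacle: each piece is a routine Chernoff/Jensen manipulation layered on Theorems~\ref{thm:expectSk} and~\ref{thm:deviationSk}. The one point to flag is that the lower bound on $\mathbf{E}\Lambda_p^k$ genuinely uses the convergence of $S_k/(\sqrt{k}\log k)$ \emph{in expectation} (not just in probability), which is precisely why Theorem~\ref{thm:expectSk} is stated with that extra assertion.
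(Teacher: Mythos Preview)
Your proof is correct and follows essentially the same route as the paper's: split on the deviation event of Theorem~\ref{thm:deviationSk} for the upper bound, Jensen's inequality combined with $\mathbf{E}S_k=O(\sqrt{k}\log k)$ for the lower bound, and Markov on the $k$-th moment with $k\asymp\epsilon^{-2}$ for the tail estimate. Your write-up is in fact slightly more explicit than the paper's (you spell out the coupling giving $\Lambda_p\stackrel{d}{=}1-\Lambda_{1-p}$ and name Jensen's inequality), but the underlying argument is identical.
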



\begin{figure}
	\centering
	\begin{minipage}{0.4\linewidth}
	\includegraphics[width=4cm]{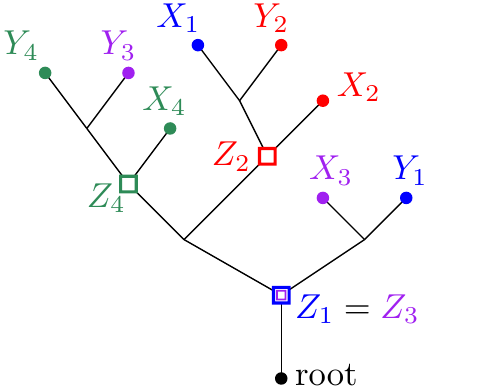}
	\end{minipage}
	\begin{minipage}{0.4\linewidth}
	\includegraphics[width=4.5cm]{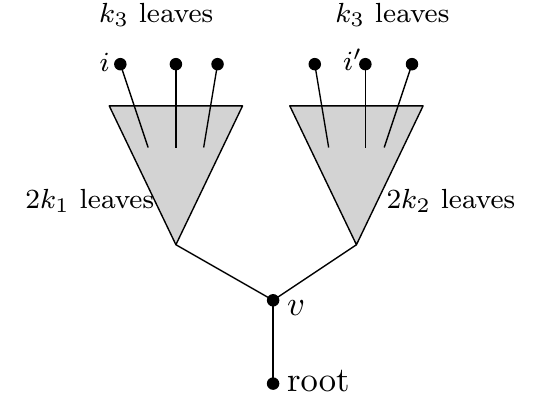}
	\end{minipage}
\caption{Left: Realization of the random variable $S_k$, for $k=4$. On this example $S_k=3$ since the four pairs of vertices $(X_i,Y_i)$ for $i\in [4]$ share three last common ancestors, namely $Z_1=Z_3$, $Z_2$, and $Z_4$. Right: Origin of the recurrence relation of Theorem~\ref{thm:main}. The $k_3$ highlighted pairs of leaves are pairs of the form $(i,i')$ which are separated by the vertex $v$. The vertex $v$ contributes to $S_k$ if and only if $k_3 \neq 0$.}
	\label{fig:rec}
\end{figure}

\section{Comments, questions, and plan} 


Equation~\eqref{eq:mainrec} solves the question of the practical calculation of the moments of~$\Lambda_{1/2}$ raised by the authors of~\cite{bbfgp,bbfgmp}, which was the starting point of this work. For example, we get $a_0(\tfrac{1}{2}), \dots, a_6(\tfrac{1}{2}) =$ {\small $1$, $1/2$, $17/60$, $7/40$, $6361/55440$, $1741/22176$, $154917299/2793510720$}, the first five values being in accordance with~\cite{bbfgp} (the random variable $\Lambda_{1/2}$ already appears prior to~\cite{bbfgmp} in the paper~\cite{bbfgp} in the context of separable permutations, and this is where the five first moments can be found).

The equation~\eqref{eq:mainrec} has some similarity to the equations written by Janson~\cite{janson} for the (limiting) moments of the Wiener index of large random trees, and in fact our method is directly inspired from that paper. The recurrence of~\cite{janson} is also strongly related (albeit for unexplained reasons, see e.g.~\cite{Chapuy:HDR}) to the quadratic recurrence formula that enables one to compute the map asymptotic constant $t_g$ driving the enumeration of maps of genus $g$, see~\cite{BGR} (see also~\cite{BDGRW} and references therein).
It would be interesting to be able to determine the asymptotics of our sequence $a_k(p)$ directly from the relation~\eqref{eq:mainrec}, either by using some ideas of these papers or by developing new ones. This is not how we proceed here, and Corollary~\ref{cor:asymptMoments} is a consequence of our independent study of the variable $S_k$. We hope that other methods could close the gap between the two sides of~\eqref{eq:asympt}, and lead to a true equivalent of the moments.

In fact, most of this paper is devoted to the  proof of convergence of $S_k/(\sqrt{k}\log k)$. We use the second moment method, which requires to estimate the asymptotics of the explicit sums expressing the first two moments of $S_k$.
In order to do this we rely on approximation theorems for variants of Riemann sums, giving results of the form
$$
\sum_{1\leq i,j\leq K} f(\frac{ij}{K}) \sim K\log K \int f, \ \ K\longrightarrow \infty,
$$
(for suitable functions $f$) which we have not been able to find in the literature, see  Theorem~\ref{thm:all-in-1dim} and Lemma~\ref{lemma:divisorSum} in Section~\ref{subsec:approx}. The extra logarithmic factor $\log K$ in these statements, compared to the usual approximation of Riemann sums by integrals, reflects the averaged behaviour of the divisor function
$$
n \longmapsto |\{(i,j),ij=n\}|,
$$
which is very irregular but whose average on large intervals is logarithmic, a result that goes back to Dirichlet (see e.g.~\cite[18.2]{HardyWright}). Hence in some sense the log-factor in Theorem~\ref{thm:expectSk} has an arithmetic origin, at least with our proofs.
It would be interesting to prove the convergence in law of $S_k/(\sqrt{k}\log k)$ with another method, in which the large scale behaviour would occur as the limit of some related process -- this remains to be done.

More generally, our paper poses the question of studying further the variable $S_k$. In particular, our methods leave open the question of giving good (and two-sided) large deviation estimates on $S_k$ (the one-sided estimates of Theorem~\ref{thm:deviationSk} are obtained by a connection with random interval graphs which is convenient but does not capture at all the whole complexity of the problem, missing in particular the logarithmic factor). Finally, as mentionned above, closing the gap in~\eqref{eq:asympt} and obtaining precise asymptotics for the moments  remains to be done.

\bigskip

The rest of the paper is devoted to the proofs. The three sections~\ref{sec:skeletons},~\ref{sec:lower},~\ref{sec:convergence} are independent.
In Section~\ref{sec:skeletons}, we prove the recurrence formula (Theorem~\ref{thm:main}), by observing that the underlying combinatorial objects have a simple decomposition (Figure~\ref{fig:rec}-Right says it all).
In Section~\ref{sec:lower}, we study lower deviations of $S_k$ via a connection with random interval graphs and Poisson point processes, to prove Theorem~\ref{thm:deviationSk}. 
Finally in Section~\ref{sec:convergence} we prove the convergence of $S_k/(\sqrt{k} \log k)$. We use the second moment method, which requires to estimate the first (Section~\ref{sec:firstmoment}) and second (Section~\ref{sec:secondmoment}) moments of $S_k$. In order to do this we rely on approximation theorems for variants of Riemann sums presented in Section~\ref{subsec:approx}.
Finally, the very short Section~\ref{sec:remaining} gives the proof of Corollary~\ref{cor:asymptMoments}.


\section{Skeletons and proof of Theorem~\ref{thm:main}}
\label{sec:skeletons}

Let as before $X_1,Y_1,\dots,X_k,Y_k$ be i.i.d. points chosen according to the Lebesgue measure on the CRT $\mathcal{T}_\mathbf{e}$, and let $Z_i=X_i \wedge Y_i$  for $i\in [k]$. 
Let $T_k$ be the combinatorial skeleton of $X_1,Y_1,\dots,X_k,Y_k$ in $\mathcal{T}_\mathbf{e}$. Almost surely, $T_k$ belongs to the set $\mathbb{T}_k$ formed by planted\footnote{planted=with unary root} binary trees with $2k$ (numbered) leaves. Moreover, the law of $T_k$ is uniform on $\mathbb{T}_k$. 

Now, we have $|\mathbb{T}_k|=(2k)!\Cat{2k-1}$ where  $\Cat{x}=\frac{1}{x+1}{2x\choose x}$ is the $x$-th Catalan number. 
Given a tree in $\mathbb{T}_k$, with leaves numbered $1,1', 2,2', \dots, k,k'$, consider the set $\mathrm{S}(T)$ of last common ancestors
$$
\mathrm{S}(T)=\{1\wedge 1', 2\wedge 2', \dots, k\wedge k'\}.
$$
In previous notation, we have $S_k=|S(T_k)|$. It follows that
$
\mathbf{E}p^{S_k} = \dfrac{b_k(p)}{(2k)!\Cat{2k-1}},
$
where $b_k(p)=\displaystyle \sum_{T\in \mathbb{T}_k} p^{|S(T)|}$. 

We now give a recursion on $b_k(p)$ via a simple root decomposition, see Figure~\ref{fig:rec}-Right. The paper~\cite{janson} was an inspiration. Let $T\in \mathbb{T}_k$. Consider the child $v$ of the root, and let $S_1,S_2$ be its left and right subtrees. For $r\in [2]$, let $k_r$ be the number of $i\in [k]$ such that both $i$ and $i'$ appear in $S_r$. Let $k_3=k-k_1-k_2$, which is the number of pairs $i,i'$ such that $v=i\wedge i'$.
All configurations corresponding to a given triple $(k_1,k_2,k_3)$ can be constructed as follows:
\begin{itemize}[itemsep=0pt, topsep=0pt,parsep=0pt, leftmargin=24pt]
	\item Choose which labels $i\in [k]$ will correspond to leaves counted by $k_1,k_2,k_3$. For the ones counted by $k_3$, choose which element of the pair will go to $S_1$ or $S_2$ ($\frac{k!}{k_1!k_2!k_3!}\times 2^{k_3}$ choices).
\item Choose a left subtree $G_1$ in $\mathbb{T}_{k_1}$ and a right subtree $G_2$ in $\mathbb{T}_{k_2}$ (and relabel leaves according to the previous choice).
\item Attach successively $k_3$ labelled leaves to $G_1$, and $k_3$ labelled leaves to $G_2$, to construct the subtrees $S_1$ and $S_2$ of $v$ in $T$.
\end{itemize}
To attach the leaves in the last step, we use Rémy's construction~\cite{Remy}. Namely, since $T_{k_1}$ has $2k_1$ leaves, it has $4k_1-1$ edges (we assume $k_1>0$ for the moment). The number of ways to attach a leaf is $2(4k_1-1)$ since a leaf can be attached to either side of each edge. The number of edges increases by $2$ at each attachment. Iterating, the number of ways to attach $k_3$ numbered leaves to a tree in $\mathbb{T}_{k_1}$ is
$$
F(k_1,k_3) := \prod_{i=0}^{k_3-1}2(4k_1-1+2i) = 
\frac{(4k_1+2k_3-3)!!2^{2k_1+k_3-1}}{(2k_1)!\Cat{2k_1-1}}.
$$
The latter formula is also valid when $k_1$ is equal to $0$, with the convention $\Cat{-1}=1$.
Putting everything together, we obtain the recursion:
\begin{align}\label{eq:firstrec}
	b_k(p) = \sum_{k_1+k_2+k_3=k, k_i\geq 0\atop{k_1+k_3>0 \mbox{\tiny ~and~} k_2+k_3>0}}
	\frac{2^{k_3}k!}{k_1!k_2!k_3!}b_{k_1}(p)b_{k_2}(p) F(k_1,k_3) F(k_2,k_3) p^{\mathbf{1}_{k_3\neq0}}.
\end{align}
Here we have used that $|\mathrm{S}(T)|=|\mathrm{S}(G_1)|+|\mathrm{S}(G_2)|+\mathbf{1}_{k_3\neq0}$, since the vertex $v$ separates some pair $i,i'$ if and only if $k_3\neq0$.
Note also that numbers $k_1,k_2,k_3$ can be zero, but we need $k_1+k_3$ and $k_2+k_3$ to be nonzero since the subtrees $S_1,S_2$ are nonempty by definition.

The recurrence~\eqref{eq:mainrec} is equivalent to~\eqref{eq:firstrec}  using that $a_k(p)=\frac{b_k(p)}{(2k)!\Cat{2k-1}}$ for all $k\geq 0$, so Theorem~\ref{thm:main} is proved.

\section{Lower deviations of $S_k$ via random interval graphs}
\label{sec:lower}

In this section we prove Theorem~\ref{thm:deviationSk} via a connection with random interval graphs.

Let $A_1,B_1,\dots,A_k,B_k$ be i.i.d. uniform variables on $[0,1]$. We can realize the variables $X_i,Y_i$ considered before by taking $X_i=\pi_\mathbf{e}(A_i), Y_i=\pi_\mathbf{e}(B_i)$, $i\in [k]$. Let $I_i$ be the interval $[\min(A_i,B_i),\max(A_i,B_i)]$ for $i\in [k]$.
Consider the graph $G_k$ on $[k]$ with an edge between $i$ and $j$ if and only if $i\neq j$ and the intervals $I_i$ and $I_j$ intersect. $G_k$ is called a \emph{random interval graph}, see~\cite{intervalGraphs}. Note that the law of $G_k$ would be the same if we replaced the uniform distribution on $[0,1]$ by any atom-free distribution on $\mathbb{R}$. Let $\alpha(G_k)$ be the size of the  largest independent set in the graph $G_k$, then we clearly have, a.s.,
\begin{align}\label{eq:GkSk}
\alpha(G_k)\leq S_k
\end{align}
since the minima of $\mathbf{e}$ on mutually disjoint intervals $I_i$ are a.s. all distinct.
It is proved in\cite{intervalGraphs} that $\alpha(G_k)\sim \frac{2}{\sqrt{\pi}}\sqrt{k}$, in probability. The proof of~\cite{intervalGraphs} is easily adapted to show the following
\begin{proposition}\label{prop:devInterval}
	For any $\epsilon>0$, we have $\mathbf{P}\left(\alpha(G_k)<\left(\dfrac{2}{\sqrt{\pi}}-\epsilon\right)\sqrt{k}\right)\leq\exp(-c\sqrt{k})$, for some $c=c(\epsilon)>0$.
\end{proposition}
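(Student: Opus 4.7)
The plan is to realize $\alpha(G_k)$ as the number of renewals in a Poisson-driven renewal process with Rayleigh-distributed gaps, and then to obtain the exponential deviation bound via Cramér's theorem.

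I would first recall that on interval graphs, the greedy algorithm (process intervals in increasing order of right endpoint, select each one disjoint from the last selected) produces a maximum independent set. Equivalently, setting $\tau_0 := 0$ and
\[
\tau_{j+1} := \min\bigl\{\max(A_i,B_i) : \min(A_i,B_i) \geq \tau_j,\ i \in [k]\bigr\}
\]
(with $\min \emptyset := +\infty$), one has $\alpha(G_k) = \max\{j \geq 0 : \tau_j \leq 1\}$. Next I would pass to a Poisson model to make the increments i.i.d.: replace the binomial process $\{(\min(A_i,B_i),\max(A_i,B_i))\}_{i \in [k]}$ of intensity $2k$ on the triangle $\{0 \leq x \leq y \leq 1\}$ by a Poisson point process $\Pi$ of intensity $2k$ on the whole half-plane $\{0 \leq x \leq y\}$, and define $\tau_j^\Pi$ by the same recursion. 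By the strong Markov property, conditionally on $\tau_j^\Pi$ the gap $G_j := \tau_{j+1}^\Pi - \tau_j^\Pi$ is independent of the past, and
\[
\mathbf{P}(G_j > s) = \mathbf{P}\bigl(\Pi \cap \{\tau_j^\Pi < x \leq y \leq \tau_j^\Pi + s\} = \emptyset\bigr) = \exp(-k s^2),
\]
since the triangle in question has area $s^2/2$. Thus $\sqrt{k}\,G_j$ is i.i.d.\ Rayleigh with density $2s e^{-s^2}$ and mean $\mu := \sqrt{\pi}/2$, and with $N^\Pi := \max\{j : \tau_j^\Pi \leq 1\}$ one has $N^\Pi \sim 2\sqrt{k}/\sqrt{\pi}$ by the law of large numbers.

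For the exponential bound I would apply Cramér's theorem. Writing $\{N^\Pi < n\} = \{n^{-1} \sum_{j=1}^n \sqrt{k}\,G_j > \sqrt{k}/n\}$ and choosing $n := \lfloor (2/\sqrt{\pi} - \epsilon)\sqrt{k}\rfloor$ makes $\sqrt{k}/n$ bounded above $\mu$ uniformly in $k$; since the Rayleigh MGF is finite on all of $\mathbb{R}$, Cramér yields a positive rate $\Psi_\epsilon$ with
\[
\mathbf{P}(N^\Pi < n) \leq \exp(-n\Psi_\epsilon) \leq \exp(-c(\epsilon)\sqrt{k}).
\]
Transferring back to the fixed-$k$ model uses monotonicity of $\alpha$ in the vertex set: let $N' \sim \mathrm{Poisson}(k(1-\epsilon'))$ independently, and on $\{N' \leq k\}$ retain the first $N'$ of the $k$ i.i.d.\ intervals. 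These form a Poisson sample of intensity $k(1-\epsilon')$, and their interval graph is a sub-vertex-set graph of $G_k$, so $\alpha(G_k) \geq \alpha(G_{N'})$. Choosing $\epsilon'$ small keeps the asymptotic lower bound above $(2/\sqrt{\pi}-\epsilon)\sqrt{k}$, and $\mathbf{P}(N' > k) \leq \exp(-c'k)$ is absorbed into the exponential bound.

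The genuine substance is in (a) identifying the gaps as i.i.d.\ Rayleigh via the strong Markov property — harmlessly extending the domain from the triangle to the half-plane is legitimate because the greedy on the half-plane coincides with the greedy on the triangle as long as $\tau_j \leq 1$ — and in (b) calibrating constants in the de-Poissonization so that the sharp constant $2/\sqrt{\pi}-\epsilon$ is matched. Both steps are standard but deserve care. For the weaker form $\mathbf{P}(\alpha(G_k) < c\sqrt{k}) \leq \exp(-c'\sqrt{k})$ with some positive $c$, which is all that is actually needed in Theorem~\ref{thm:deviationSk}, a cruder bin-counting argument (partition $[0,1]$ into $c\sqrt{k}$ subintervals with $c$ very small and ask for at least one $I_i$ contained in each, using negative association for the emptiness indicators) would already suffice.
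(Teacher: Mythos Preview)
Your proof is correct and follows essentially the same strategy as the paper's: embed the interval endpoints in a Poisson point process, observe that the greedy (earliest-finish-time) selections give i.i.d.\ gaps with the Rayleigh law of mean $\sqrt{\pi}/2$, apply Chernoff/Cram\'er to the sum of gaps, and de-Poissonize. The only cosmetic differences are in the Poisson embedding: the paper works with a unit-intensity PPP on the full quadrant $(\mathbb{R}_{\geq 0})^2$ and couples to $G_k$ via the spatial stopping radius $\tau_k$, whereas you use an intensity-$2k$ PPP on the cone $\{0\leq x\leq y\}$ and couple via an independent Poisson count $N'$; both are standard and equivalent.
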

Note that although the precise value of the constant $\frac{2}{\sqrt{\pi}}$ is sharp for $\alpha(G_k)$, many other factors are lost a priori in the inequality~\eqref{eq:GkSk} (even possibly logarithmic factors). 
\begin{proof}
	We follow the proof in~\cite{intervalGraphs}. Consider a Poisson point process of intensity $1$ on $(\mathbb{R}_{\geq 0})^2$. Let $(U_1,V_1),(U_2,V_2), \dots$ be a subsequence of points from that process chosen as follows: $(U_1,V_1)$ is the point that minimizes $\max(U_1,V_1)$, and for and $j\geq 2$, $(U_j,V_j)$ is the point that minimizes $\max(U_j,V_j)$ among points whose coordinates are larger than $\max(U_{j-1},V_{j-1})$.

	Each point $(u,v)$ of the point process defines a real interval $[\min(u,v),\max(u,v)]$. If we let $\tau_k$ be the smallest $R$ such that $[0,R]^2$ contains $k$ points of the process, then the intersection graph of the $k$ corresponding intervals has the same law as $G_k$. Moreover, let $F_R=\max\{p,(U_p,V_p) \in [0,R]^2\}$, then the points $(U_1,V_1),\dots,(U_{F_{\tau_k}},V_{F_{\tau_k}})$ define an independent set of size $F_{\tau_k}$ in this graph (and in fact, a largest independent set, see~\cite{intervalGraphs}).

	We thus only have to estimate the probability that $F_{\tau_k}$ is small.
	Choose $R=(1-\epsilon)\sqrt{k}$ with $\epsilon>0$. The number of points in $[0,R]^2$ is a Poisson variable of parameter $(1-\epsilon)^2k$, 
	therefore the probability that it is more than $k$ is at most $e^{-c'k}$ for some $c'>0$, by the Chernoff inequality. Therefore, $\tau_k>R$ with probability at least $1-e^{-c'k}$. Note that if this event holds, we have $F_{\tau_k}\geq F_R$. 

	Now, as observed in~\cite{intervalGraphs}, the differences $\max(U_{j+1},V_{j+1})-\max(U_j,V_j)$ form a sequence of i.i.d real random variables with mean $\frac{\sqrt{\pi}}{2}$ and exponentially bounded moments. Hence, for $\epsilon$ small enough, using the Chernoff inequality again, the probability that the sum of the first $\left\lfloor\frac{2}{\sqrt{\pi}}(1-\epsilon) R\right\rfloor$ of them is more than $R$ is at most $e^{-c''R}$. Note that this is the same as the probability that $F_{R} \leq \frac{2}{\sqrt{\pi}}(1-\epsilon) R$.

	Hence the probability that $F_{\tau_k}$ is at most $\left\lfloor\frac{2}{\sqrt{\pi}}(1-\epsilon) R\right\rfloor$ is at most $e^{-c'k}+e^{-c''R}$, and since $R=(1-\epsilon)\sqrt{k}$, we are done (up to adapting the value of $\epsilon$).
\end{proof}

\begin{proof}[Proof of Theorem~\ref{thm:deviationSk}]
Theorem~\ref{thm:deviationSk} directly follows from the last proposition and~\eqref{eq:GkSk}.
\end{proof}

\section{Convergence of $S_k/(\sqrt{k} \log k)$}
\label{sec:convergence}

We now turn to the proof of Theorem~\eqref{thm:expectSk}. We use the second moment method, requiring to estimate the first two moments of $S_k$. Theorem~\eqref{thm:expectSk} follows from Propositions~\ref{prop:expectSk} and~\ref{prop:secondSk} below.

We start in Section~\ref{subsec:approx} by stating some lemmas (not directly related to trees and independent from the rest of the paper) that we need to approximate sums by integrals, and that we haven't been able to find in the literature.

\subsection{Sum/integral approximation statements }
\label{subsec:approx}

We give first a statement on compact subsets of $(0,\infty)$, and then a "dominated"  version that we will need on the full interval.

\begin{lemma}\label{lemma:divisorSum}
	Let $g:\mathbf{R}_{>0}\rightarrow \mathbf{R}_{>0}$ be continuous, and let $0<\epsilon <\Lambda<\infty$. 
	Let $A=A(K)$ and $B=B(K)$ with $1\leq A<<B\leq K$ and moreover $\log A = o(\log K)$ and $\log B \sim \log K$.

	Then one has
	$\displaystyle\sum_{A\leq p,q \leq B, \atop \epsilon K \leq pq \leq \Lambda K} g\left(\frac{pq}{K}\right)
	\sim  K \log K 	\int_{[\epsilon,\Lambda]} g.
$
\end{lemma}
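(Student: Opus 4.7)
The plan is to recognize the sum as a weighted divisor sum and reduce the estimate to Dirichlet's classical asymptotic. Regrouping by $n := pq$, the left-hand side equals $\sum_{\epsilon K \le n \le \Lambda K} r(n)\,g(n/K)$, where $r(n) := |\{(p,q) \in ([A,B]\cap \mathbb{Z})^2 : pq = n\}|$. I would replace $r(n)$ by the classical divisor function $d(n) = |\{(p,q) \in \mathbb{Z}_{>0}^2 : pq=n\}|$ up to negligible error, then invoke the Dirichlet estimate $D(N) := \sum_{n\le N} d(n) = N\log N + O(N)$.

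For the replacement step, I would bound $d(n) - r(n) \le 2\,|\{p \mid n : p \notin [A,B]\}|$ by a union bound on which coordinate of the pair lies outside the window. Switching the order of summation,
$$\sum_{n \le \Lambda K} (d(n) - r(n)) \;\le\; 2 \sum_{p < A} \Big\lfloor \frac{\Lambda K}{p}\Big\rfloor + 2\sum_{B < p \le \Lambda K} \Big\lfloor \frac{\Lambda K}{p}\Big\rfloor \;=\; O(K\log A) + O\bigl(K(\log(\Lambda K) - \log B)\bigr),$$
by the harmonic estimate $\sum_{p \le X} 1/p = \log X + O(1)$. Under the hypotheses $\log A = o(\log K)$ and $\log B \sim \log K$ this is $o(K\log K)$; multiplying by the $L^\infty$-bound on $g$ over $[\epsilon,\Lambda]$ (finite by continuity on a compact set) yields $\sum_{\epsilon K \le n \le \Lambda K}(d(n) - r(n))\,g(n/K) = o(K\log K)$.

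For the main term $\sum d(n) g(n/K)$, I would first treat an indicator $g = \mathbf{1}_{[a,b]}$ with $\epsilon \le a < b \le \Lambda$: Dirichlet's formula gives $D(bK)-D(aK) + O(1) = (b-a)K\log K + O(K)$, using $bK\log(bK) - aK\log(aK) = (b-a)K\log K + O(K)$, which is exactly $K\log K\int \mathbf{1}_{[a,b]}$. For continuous $g$, uniform continuity on the compact interval allows sandwiching $g_- \le g \le g_+$ between step functions with $\int(g_+ - g_-) < \delta$; applying the indicator case to $g_\pm$ and letting $\delta \to 0$ after $K\to\infty$ completes the proof. The main subtlety is really in the first step: $r(n)$ and $d(n)$ can differ sharply at individual $n$, and only their averaged discrepancy over the long window $[\epsilon K,\Lambda K]$ is controlled, via harmonic sums — this is exactly what the hypotheses on $A$ and $B$ are engineered to make small. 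The $\log K$ factor in the conclusion is then the one coming from Dirichlet's theorem, i.e., from the average order of the divisor function.
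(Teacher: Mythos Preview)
Your argument is correct. Both your proof and the paper's share the same architecture: regroup by $n=pq$ so the sum becomes $\sum_n \sigma(n)\,g(n/K)$ for a (restricted) divisor count, then exploit the logarithmic average behaviour of divisor counts together with a step-function approximation of $g$ coming from uniform continuity on $[\epsilon,\Lambda]$.

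The implementations differ in one meaningful way. You first replace the restricted count $r(n)$ by the full divisor function $d(n)$, controlling the discrepancy $\sum_{n\le\Lambda K}(d(n)-r(n))$ via harmonic sums over $p<A$ and $p>B$, and then invoke Dirichlet's formula $D(N)=N\log N+O(N)$ as a black box on each step of a staircase approximation to $g$. The paper instead works with the restricted count $\sigma$ throughout: it partitions $[\epsilon K,\Lambda K)$ into subintervals $I_i$ of width $\delta K$ and estimates $\sum_{j\in I_i}\sigma(j)$ directly by writing it as a double sum over $p,q$ and splitting the range of $p$ into three pieces ($p<a_i/B$, $a_i/B<p<a_{i-1}/A$, $p>a_{i-1}/A$), effectively re-deriving the needed Dirichlet-type estimate by hand with the $A,B$ restrictions baked in. Your route is more modular and makes the dependence on Dirichlet's theorem explicit; the paper's is self-contained and avoids citing the classical result, at the price of a slightly longer hands-on computation. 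Both arrive at the same endpoint, and the hypotheses on $A,B$ play the identical role in each: making the contribution of divisors outside $[A,B]$ negligible against $K\log K$.
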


\begin{theorem}\label{thm:all-in-1dim}
	Let $f:(\mathbb{Z}_{>0})^3 \longrightarrow \mathbb{R}$.
	 Assume that
	there exist functions $A=A(K),B=B(K)$ such that $1\leq A<<B\leq K$ and moreover $\log B \sim \log K$ and $\log A = o(\log K)$, such that
\begin{itemize}
	\item[(i)]
		$\displaystyle \sum_{{1\leq p,q\leq K}\atop \max(p,q)>B} f(p,q,K) = o( K \log K)$
	\item[(ii)] 
There is a constant $C>0$ such that 
for $p,q\leq B$, one has
		$$
		|f(p,q,K)| \leq C \min\left(\left(\frac{pq}{K}\right)^{-1/2},\left( \frac{pq}{K}\right)^{-3/2}\right),
		$$
	\item[(iii)]
		There exist an integrable function $g:\mathbb{R}_{>0}\rightarrow \mathbb{R}_{>0}$ such that
		$$f(p,q,K)\sim g(\frac{pq}{K}),$$
		uniformly for $A\leq p,q \leq B$ and $\frac{pq}{K}$ in any compact subset of $\mathbf{R}_{>0}$.
\end{itemize}
Then one has, when $K$ goes to infinity,
$$
	\sum_{1\leq p,q\leq K} f(p,q,K) \sim K \log K \int_{0}^\infty g.
$$

\end{theorem}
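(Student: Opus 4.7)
The plan is to cut the sum $\sum_{1\le p,q\le K} f(p,q,K)$ into four pieces according to the sizes of $\max(p,q)$, $\min(p,q)$, and the ratio $pq/K$, and to handle the three ``boundary'' pieces with hypotheses (i) and (ii) while the main ``bulk'' piece is treated via (iii) together with Lemma~\ref{lemma:divisorSum}. Hypothesis (i) immediately disposes of the region $\{\max(p,q)>B\}$. After fixing parameters $0<\epsilon<\Lambda<\infty$ (to be sent to $0$ and $\infty$ at the end), I would split the remaining range $\{1\le p,q\le B\}$ into a thin slice $\{\min(p,q)<A\}$, a bulk $\{A\le p,q\le B,\ \epsilon K\le pq\le\Lambda K\}$, and two tails $\{pq<\epsilon K\}$ and $\{pq>\Lambda K\}$.

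On the thin slice one may assume $p<A$ by symmetry and bound $|f|$ using (ii). I would split the inner sum on $q$ at $q=\min(B,K/p)$ to choose between the two arms of the $\min$, and compare each piece to its integral analog. A short computation should then give a bound of order $O(K(1+\log A))=o(K\log K)$, thanks to $\log A=o(\log K)$. On the bulk, (iii) gives $f(p,q,K)\sim g(pq/K)$ uniformly, and Lemma~\ref{lemma:divisorSum} applies to yield a contribution equivalent to $K\log K\int_\epsilon^\Lambda g$.

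For the two tails the key idea I would use is that, for any function $F$,
\[
\sum_{p,q\ge 1} F(pq) \;=\; \sum_{n\ge 1} d(n)\,F(n),
\]
where $d$ is the divisor function, so (ii) controls each tail by a $d$-weighted sum over $n$. I would then invoke Dirichlet's classical estimate $\sum_{n\le N}d(n)=N\log N+O(N)$ and partial summation to derive the asymptotics
\[
\sum_{n\le \epsilon K}\frac{d(n)}{\sqrt n}\;\sim\;2\sqrt{\epsilon K}\,\log K, \qquad \sum_{n>\Lambda K}\frac{d(n)}{n^{3/2}}\;\sim\;\frac{2\log K}{\sqrt{\Lambda K}},
\]
whence both tails contribute at most $O(\sqrt\epsilon+1/\sqrt\Lambda)\cdot K\log K$. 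Combining everything and sending $\epsilon\to 0$, $\Lambda\to\infty$ (legitimate by the integrability of $g$) will yield the claimed equivalent $K\log K\int_0^\infty g$.

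The hardest part will be the tail step: the hidden $\log K$ coming from the average of the divisor function is precisely what allows the tail bounds to match the order $K\log K$ of the target asymptotic; without it one would lose the logarithmic factor entirely. Everything else is bookkeeping and routine application of the assumptions.
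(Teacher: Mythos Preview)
Your proposal is correct and follows essentially the same decomposition as the paper: dispose of $\max(p,q)>B$ by (i), handle the thin slice $\min(p,q)<A$ and the two tails $pq/K\notin[\epsilon,\Lambda]$ via (ii), treat the bulk with (iii) and Lemma~\ref{lemma:divisorSum}, then let $\epsilon\to0$ and $\Lambda\to\infty$. The one cosmetic difference is in the tail bookkeeping: the paper iterates the double sums directly (fix $p$, sum over $q$, then sum over $p$) to obtain $O(\sqrt{\epsilon}\,K\log K)$ and $O(\Lambda^{-1/2}K\log K)$, whereas you rewrite them as $\sum_n d(n)F(n)$ and appeal to Dirichlet plus Abel summation---both routes produce the same bounds, and your version makes the origin of the $\log K$ factor slightly more transparent.
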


\begin{proof}[Proof of Theorem~\ref{thm:all-in-1dim}]
	Fix a compact $[\epsilon,\Lambda]\subset \mathbb{R}_{>0}$.

	Write $H(I)=\sum\limits_{{1\leq p,q\leq K}\atop{(p,q)\in I}} f(p,q,K)$.
	We first estimate $H(\{p,q\leq B, \frac{pq}{K}>\Lambda\})$. We have, by (ii),
	\begin{align*}
H(\{p,q\leq B, \frac{pq}{K}>\Lambda\}) 
		&\leq C \sum_{{A\leq p,q \leq \Lambda K} \atop pq\geq \Lambda K} (\frac{pq}{K})^{-3/2}\\
		&\leq CK^{3/2} \sum_{A\leq p \leq \Lambda K} p^{-3/2} \sum_{q\geq \frac{\Lambda K}{p}} q^{-3/2} \\
		&\leq  K^{3/2} (\Lambda K)^{-1/2}  \sum_{A\leq p \leq \Lambda K} O(p^{-1}) \\
		&\leq  O\big(\Lambda^{-1/2} K \log \Lambda K \big). \\
\end{align*}

	We now estimate $H(\{p,q\leq B, \frac{pq}{K}<\epsilon\})$. We have, by (ii) again,

\begin{align*}
H(\{p,q\leq B, \frac{pq}{K}<\epsilon\})
	&\leq  C K^{1/2} \sum_{{p\leq B}} p^{-1/2} \sum_{q\leq \lfloor \epsilon K /p\rfloor} q^{-1/2} \\
	&\leq K^{1/2} \sum_{{p\leq B}} p^{-1/2} O\left(1+\sqrt{\frac{\epsilon K}{p}}\right)
	\\
	&\leq O\left(B^{1/2}K^{1/2} + B^{1/2}K \sqrt{\epsilon} \sum_{{p\leq B}} p^{-1}\right)
	\\&= O\left(K (1+\sqrt{\epsilon} \log K)\right),
	\end{align*}
	where we used $B\leq K$.

	We now estimate $H(\{p,q\leq B, \frac{pq}{K}<\Lambda, p< A\})$. Using (ii) again, 
	\begin{align*}
H(\{p,q \leq B, \frac{pq}{K}<\Lambda, p< A\}) 
		&\leq CK^{1/2} 	\sum_{{p< A, q\leq B}\atop pq\leq \Lambda K } 
		(pq)^{-1/2} \\
		&\leq C K^{1/2}\sum_{{p< A} } 
		p^{-1/2} \sum_{q\leq \Lambda K /p}  q^{-1/2} )
		\\
		&= \Lambda^{1/2} K  O(\sum_{p< A} p^{-1}) 
		\\
		&= \Lambda^{1/2} K   O(\log A) =  \Lambda^{1/2} K   o(\log K). 
	\end{align*}
By symmetry of hypotheses the same estimate holds for $H(\{p,q\leq B, \frac{pq}{K}<\Lambda, q< A\})$.

	Putting all estimates together, including $(i)$, the quantity to estimate is equal to
\begin{align*}
	H([K]^2) = K \left( O(1)+ O(\sqrt{\epsilon} \log K) + O(\Lambda^{-1/2} \log \Lambda K )+ (1+\Lambda^{1/2}) o(\log K)\right)
	+H(\{A\leq p,q \leq B,\epsilon<\frac{pq}{K}<\Lambda\})
\end{align*}
Now, from (iii) and Lemma~\ref{lemma:divisorSum}, we have 
$$
	H(\{A\leq p,q \leq B,\epsilon<\frac{pq}{K}<\Lambda\})\sim  K \log K \int_{\epsilon}^\Lambda g.
$$
Taking the limit $K\rightarrow \infty$, it follows that any accumulation point of the sequence $H([K]^2)/(K\log K)$ is equal to 
$$
\left(\int_{\epsilon}^\Lambda g \right)+ O(\sqrt{\epsilon}+\Lambda^{-1/2}).
$$
Now we let $\epsilon$ go to zero and $\Lambda$ go to infinity, and we deduce that $H([K]^2)/(K\log K)$ has a unique accumulation point, equal to $\int_{0}^\infty g$, which is what we wanted to prove.
\end{proof}

\begin{proof}[Proof of Lemma~\ref{lemma:divisorSum}]
	The sum $S$ to evaluate is equal to $\sum_{\epsilon K \leq j \leq \Lambda K} g(\frac{j}{K}) \sigma(j)$ where $\sigma(j)=|\{(p,q),A\leq p,q\leq B, pq=j\}|$ is a restricted variant of the divisor function. The idea of the proof is as follows:  the divisor function is irregular but it behaves as a logarithm on average, and the continuity of $g$ is enough to guarantee that this average behaviour is the effect we observe in our case. We can thus pull the logarithm out, and estimate the remaining sum by an integral in a standard way. Let us proceed with the proof, in particular we have to be careful on the scale of the restrictions $A,B$ imposed to $p$ and $q$. 

	Let $\ell \geq 1$ be a fixed integer, and let us split the interval $[\epsilon K, \Lambda K)$ into $\ell$ intervals $I_1,\dots,I_\ell$ of width $\delta K$ with $\delta= (\Lambda-\epsilon)/\ell$, with $I_i=[a_{i-1},a_{i})$ and $a_i=(\epsilon+i\delta)K$.

	For $i\in [\ell]$, we have 
	\begin{align*}
		\sum_{j\in I_i} \sigma(j) = \sum_{p=A}^B \sum_{q=\max(A,a_{i-1}/p)}^{\min(B, a_i/p)} 1 
		&= \sum_{p=A}^B \lfloor \min(B, a_i/p)\rfloor - \lfloor \max(A,a_{i-1}/p) \rfloor\\
		& = O(B-A) + \sum_{p=A}^B \min(B, a_i/p) - \max(A,a_{i-1}/p).
	\end{align*}
	Note that $a_i/B < a_{i-1}/A$ for $K$ large enough (uniformly).
	We split the last sum in parts. The contribution of $p<a_i/B$ is at most (taking a worse value for the minimum)
	$$
	\sum_{p\leq a_i/B} a_i/p - a_{i-1}/p = (a_i-a_{i-1}) O(\log K/B) = (a_i-a_{i-1}) o(\log K),
	$$
	where we used $\log K \sim \log B$ so $\log K -\log B =o(\log K)$. The contribution of $a_{i}/B<p<a_{i-1}/A$ is  equal to
	$$ \sum_{p=a_i/B}^{a_{i-1}/A} a_i/p -a_{i-1}/p =  (1+o(1))(a_i-a_{i-1}) \log \frac{Ba_{i-1}}{Aa_{i}}
 =  (1+o(1))(a_i-a_{i-1}) \log K,$$
	where we used that $a_{i}/a_{i-1} = \Theta(1)$ and that $\log B -\log A \sim \log K$.
	Finally,
	The contribution of $p>a_{i-1}/A$ is at most (taking a worse value for the maximum)
	$$
	\sum_{p\geq a_{i-1}/A}^B a_i/p - a_{i-1}/p = (a_i-a_{i-1}) O(\log AB/K) = (a_i-a_{i-1}) o(\log K).
	$$
We have thus proved that 
	\begin{align}\label{eq:averageDivisor}
	\sum_{j\in I_i} \sigma(j)  = O(B-A) + (a_i-a_{i-1}) o(\log K) + (1+o(1))(a_i-a_{i-1}) \log K
	=(1+o(1))\delta K \log K +o(K \log K),
	\end{align}
where the big and little-o are independent of $\delta$.
	Now, since $g$ is absolutely continuous (we are on a compact set), we have, for $j\in I_i$, 
	$g(\frac{j}{K}) = g(\frac{a_{i-1}}{K}) + O(r_\delta)$, uniformly on $i$ and $j$, with $r_\delta\rightarrow 0$ when $\delta\rightarrow 0$.

	Putting all estimates together we obtain:
	\begin{align*}
		S &=
		\sum_{i=1}^\ell \Big(g(\frac{a_{i-1}}{K}) +O(r_\delta)\Big) ((1+o(1))\delta K \log K + o(K \log K)) 
\end{align*}
	We can finally $\ell$ go to infinity (i.e. $\delta$ go to zero) slowly enough, so that $\sum_{i=1}^\ell \delta g(\frac{a_{i-1}}{K}) $ converges to $\int_{[\epsilon,\Lambda]} g$, and we are done.
\end{proof}

\subsection{First moment}
\label{sec:firstmoment}

This section is dedicated to the proof of the following proposition.
\begin{proposition}\label{prop:expectSk}
One has, when $k\rightarrow\infty$,
	$\displaystyle \mathbf{E}S_k\sim \frac{1}{\sqrt{2\pi}}\sqrt{k}\log k.$
\end{proposition}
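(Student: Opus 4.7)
The plan is to express $\mathbf{E} S_k$ as a weighted sum over the possible subtree-size profiles of internal vertices of the uniform skeleton $T_k\in\mathbb{T}_k$, and then evaluate this sum by combining Stirling's formula, a Poisson approximation for the separation probability, and Theorem~\ref{thm:all-in-1dim}.

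\textbf{Combinatorial setup.} Writing $S_k=\sum_v\mathbf{1}_{v\in\mathrm{S}(T_k)}$ (sum over internal vertices of $T_k$), I classify each $v$ by its profile $(a,b,c)$, with $a,b$ the sizes of its left and right subtrees and $c=2k-a-b$ the number of leaves outside its subtree. A root-style decomposition as in Section~\ref{sec:skeletons} (choose a label partition $L\sqcup R\sqcup O$ of sizes $(a,b,c)$; the two planted subtrees of $v$'s children; and the external planted tree of size $c+1$ in which $v$ is a distinguished leaf) shows that the number of such configurations $(T,v)$ equals $(2k)!(c+1)\Cat{a-1}\Cat{b-1}\Cat{c}$. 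Dividing by $|\mathbb{T}_k|=(2k)!\Cat{2k-1}$ yields
$$\mathbf{E} S_k \;=\; \frac{1}{\Cat{2k-1}}\sum_{\substack{a,b\geq 1,\,c\geq 0\\ a+b+c=2k}}(c+1)\,\Cat{a-1}\Cat{b-1}\Cat{c}\cdot q_k(a,b,c),$$
where $q_k(a,b,c)$ is the probability, under a uniform placement of the $k$ labeled pairs into buckets of sizes $(a,b,c)$, that at least one pair is split between $L$ and $R$.

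\textbf{Key estimates.} Letting $X$ count the pairs split between $L$ and $R$, one has $\mathbf{E} X=ab/(2k-1)$, and inclusion-exclusion yields an explicit expression for $\mathbf{P}(X=0)=1-q_k$ whose $j$-th term is asymptotic to $(-ab/(2k))^j/j!$ whenever $a,b=o(k)$. Summing yields $q_k(a,b,c)\sim 1-e^{-ab/(2k)}$. Stirling's formula $\Cat{n}\sim 4^n/(\sqrt\pi n^{3/2})$ simplifies the Catalan weight in the dominant regime $a,b=o(k)$, $c\sim 2k$ to $(c+1)\Cat{a-1}\Cat{b-1}\Cat{c}/\Cat{2k-1}\sim k/(2\pi(ab)^{3/2})$. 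Plugging in,
$$\mathbf{E} S_k \;\sim\; \frac{1}{4\pi\sqrt{2k}}\sum_{a,b\geq 1}g\!\left(\frac{ab}{2k}\right),\qquad g(y):=y^{-3/2}(1-e^{-y}),$$
and an integration by parts ($u=1-e^{-y}$, $dv=y^{-3/2}dy$) gives $\int_0^\infty g=2\sqrt\pi$. Applying Theorem~\ref{thm:all-in-1dim} with $K=2k$ yields $\sum_{a,b}g(ab/K)\sim K\log K\cdot 2\sqrt\pi$, so
$$\mathbf{E} S_k \;\sim\; \frac{(2k)\log(2k)\cdot 2\sqrt\pi}{4\pi\sqrt{2k}} \;=\; \frac{\sqrt k\log k}{\sqrt{2\pi}}.$$

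\textbf{Main obstacle.} The hard part is not the heuristic computation above but the uniform estimates required to legitimately apply Theorem~\ref{thm:all-in-1dim}. Condition (iii) (uniform convergence of the summand to $g(ab/K)$ on compacts of $ab/K$) will follow from the pointwise asymptotics combined with standard uniform Stirling bounds. Condition (ii) calls for a uniform bound of the form $|f(a,b,K)|\leq C\min((ab/K)^{-1/2},(ab/K)^{-3/2})$, which reduces to combining $q_k(a,b,c)\leq\mathbf{E} X=ab/(2k-1)$ on the small-$ab$ side with the trivial $q_k\leq 1$ on the large-$ab$ side, multiplied by the Stirling-bounded Catalan weight. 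Condition (i) (negligibility of the tail $\max(a,b)>B$) requires checking that when $a$ approaches $2k$, the factor $c^{-3/2}$ in the Stirling expansion suppresses the weight; taking for instance $B=k/\log\log k$ is more than enough. With these uniform controls in place, the computation of the previous paragraph delivers the proposition.
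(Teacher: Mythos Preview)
Your proof plan is correct and follows essentially the same route as the paper: express $\mathbf{E}S_k$ via the subtree-size profile $(a,b,c)$ of a generic internal vertex, approximate the separation probability by $1-e^{-ab/K}$ via inclusion--exclusion, simplify the Catalan weights by Stirling, and invoke Theorem~\ref{thm:all-in-1dim} with the same limiting function $g(y)=y^{-3/2}(1-e^{-y})$ and $\int_0^\infty g=2\sqrt{\pi}$. The paper takes $A=1$, $B=K/\log K$ (your $B=k/\log\log k$ works just as well), and its verification of hypotheses (i)--(iii) is exactly the set of uniform estimates you flag in your ``Main obstacle'' paragraph.
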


	First, we have $\mathbf{E}S_k=(K-1)\mathbf{P} (v\in \mathcal{S}_k)$ where the probability is on $T_k$ as before and on a vertex $v$ chosen uniformly at random among the $K-1$ internal vertices of $T_k$, with $K=2k$.
Therefore we want to prove that
$$\mathbf{P} (v\in \mathcal{S}_k)\sim \frac{1}{2\sqrt{\pi}}K^{-1/2}\log K.
$$

We let $T_v$ be the subtree of $v$ in $T_k$, and $T_v^L$, $T_v^R$ its left and right subtrees.
	The vertex $v$ belongs to $\mathcal{S}_k$ if and only if there is $i\in [k]$ such that $i\in T_v^L, i'\in T_v^R$, or the converse. Therefore we have (we use $|\cdot|$ for the number of leaves)
	\begin{align}\label{eq:Ev}
		\mathbf{P} (v\in \mathcal{S}_k)=\mathbf{E} C(|T_v^R|, |T_v^L|, K),
	\end{align}
where for $p+q\leq K$, $C(p,q,K)$ is the probability that for a uniform random matching on $[K]$, there exist $i\in [p], j\in p+[q]$ which are matched together.
Note that we can write this explicitly:
	\begin{align}\label{eq:EvExplicit}
		\mathbf{E} C(|T_v^R|, |T_v^L|, K)=\sum_{p+q+r=K+1\atop p,q,r \geq 1}
		P(p,q,K)
		C(p,q,K),
	\end{align}
	where 
	\begin{align}
		P(p,q,K)=\mathbf{P}(T_v^R=p,T_v^L=q)&=
		\frac{1}{(K-1)\Cat{K-1}}\Cat{p-1} \Cat{q-1} \Cat{K-p-q}(K+1-p-q) \label{eq:Pexact}\\
\label{eq:CpqKexact}
		C(p,q,K) &= \sum_{\ell\geq 1} (-1)^{\ell+1} {p \choose \ell} {q \choose \ell} \ell! \frac{(K-2\ell-1)!!}{(K-1)!!}
		= \sum_{\ell\geq 1} \frac{(-1)^{\ell+1} (p)_\ell (q)_\ell}{(K)^{(2)}_\ell \ell!},
	\end{align}
where we use the notation $(m)_\ell=m(m-1)\dots(m-\ell+1)$ and $(m)^{(2)}_\ell=(m-1)(m-3)\dots(m-2\ell+1)$. 
	In formulas~\eqref{eq:EvExplicit}-\eqref{eq:Pexact}, $r=K+1-p-q$ represents the number of leaves of the tree that remains after deleting all subtrees of $v$: this tree is one of the $\Cat{r-1}\times r$ planted binary trees with one marked leaf and $r$ leaves in total, while the two subtrees of $v$ are among the $\Cat{p-1}\Cat{q-1}$ pairs of trees of the given size: see Figure~\ref{fig:proba1}-Left.
	Formula~\eqref{eq:CpqKexact} is a direct consequence of the inclusion-exclusion formula (the $\ell$-th summand, without sign, counts matchings on $[K]$ with $\ell$ distinguished matched pairs $(i,j)$ such that $i\in[p], j\in p+[q]$, divided by the total number $(K-1)!!$ of matchings on $[K]$).

\begin{figure}
	\centering
	\begin{minipage}{\linewidth}\centering
	\includegraphics[height=30mm]{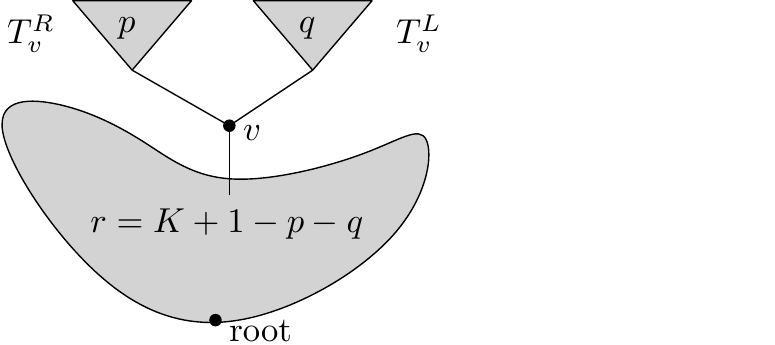}
	\hspace{1cm}
	\includegraphics[height=30mm]{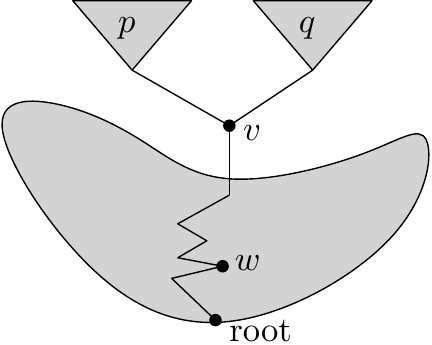}
	\end{minipage}
	\caption{Left: Origin of the Catalan numbers in~\eqref{eq:Pexact}. Labels $p,q,r$ indicate the size of each highlighted tree (number of leaves, including $v$ for the one containing the root). The whole tree has $K=2k$ leaves in total, and $K-1$ internal vertices.
	Right: the variant used in~\eqref{eq:EvExplicitTilde} in Section~\ref{sec:secondmoment}.
	}
	\label{fig:proba1}
\end{figure}

We will apply Theorem~\ref{thm:all-in-1dim} above, for the function
$$
f(p,q,K):= K^{3/2} P(p,q,K) C(p,q,K)
$$
with $A=1$ and $B=K/\log K$. We have to check the three hypotheses of that theorem.

\smallskip
{\noindent \bf (i) Contribution of $|T_v|>B$.}
Recall that the Catalan numbers satisfy the asymptotics 
	\begin{align}\label{eq:largeCat}
		\Cat{m} \sim \frac{1}{\sqrt{\pi}}m^{-3/2}4^m \ \ , \ \ m\longrightarrow \infty.
	\end{align}
It follows (classically) that, for $B=K/\log K$,  
\begin{align}
	\mathbf{P}(|T_v|>B) \leq \sum_{p> B} \frac{\Cat{p}\Cat{K+1-p}(K+1-p)}{(K-1)\Cat{K-1}} &= \sum_{p>B} O(p^{-3/2}(1-\tfrac{p-1}{K})^{-1/2})\label{eq:classicalBoundLargeSubtree}\\
	&=\sum_{B<p<K/2} O(p^{-3/2}(1-\tfrac{p-1}{K})^{-1/2}) + \sum_{K/2\leq B \leq K} O(p^{-3/2}(1-\tfrac{p-1}{K})^{-1/2}) \nonumber
	\\
	&= O(B^{-1/2}) + O(K^{-1/2}) = O(B^{-1/2}) = o(K^{-1/2}\log K). \nonumber
\end{align}
Since $\max(|T_v^L|,|T_v^R|)>B$ implies $|T_v|>B$, the same upper bounds holds for $\mathbf{P}(\max(|T_v^L|,|T_v^R|)>B)$, which exactly says that hypothesis (i) of Theorem~\ref{thm:all-in-1dim} is satisfied for our choice of $f$ and $B$.

\smallskip
{\noindent \bf (ii) Domination bound for $f$.}
The first term in the inclusion-exclusion formula~\eqref{eq:CpqKexact} upper bounds the total sum, thus
\begin{align}\label{eq:CpqKbound}
	C(p,q,K)  \leq \frac{pq}{K-1} = O(\frac{pq}{K}).
\end{align}
Moreover note that, 
uniformly over $p,q\leq B$
we have by~\eqref{eq:largeCat}
		\begin{align}\label{eq:PpqKestimate}
			P(p,q,K)&=\frac{1}{4}\times 4^{1-p}C_{p-1}4^{1-q}C_{q-1} 
		\frac{(K+1-p-q)4^{p+q-K}C_{K-p-q}}{(K-1)4^{1-K}C_{K-1}} \\
			&\sim \frac{1}{4}\times 4^{1-p}C_{p-1}4^{1-q}C_{q-1},\label{eq:PpqKestimate2}
		\end{align}
	since $K-p-q\sim K$ uniformly on this set. This quantity is $O((pq)^{-3/2})$ by~\eqref{eq:largeCat} again. Using either the bound~\eqref{eq:CpqKbound} or the bound $C(p,q,K)\leq 1$, we deduce that there is $C>0$ such that for $p,q\leq B$ one has
$$
|f(p,q,K)| \leq C \min\left(\left(\frac{pq}{K}\right)^{-1/2},\left( \frac{pq}{K}\right)^{-3/2}\right),
$$
i.e. Hypothesis (ii) of Theorem~\ref{thm:all-in-1dim} holds.

\smallskip

{\noindent \bf (iii) Convergence on compact sets.}
We have:
\begin{lemma}\label{lemma:boundCpqK}
	Let $p,q\in [K]$. For all $\epsilon,\Lambda>0$  we have, uniformly for $\epsilon\leq \frac{pq}{K}\leq \Lambda$,
	$$
	C(p,q,K) \sim (1-e^{-\frac{pq}{K}})(1+O(\min(p,q)^{-1/3})).
	$$
\end{lemma}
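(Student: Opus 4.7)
The plan is to start from the inclusion-exclusion formula~\eqref{eq:CpqKexact}, factor out the would-be Poisson term $(pq/K)^\ell/\ell!$ from each summand, and show that the remaining multiplicative correction is $1+O(\min(p,q)^{-1/3})$ uniformly on $\ell$'s up to a suitable cutoff, while the tail of the series contributes negligibly. Writing $x=pq/K\in[\epsilon,\Lambda]$ and assuming without loss of generality that $p\leq q$, the hypothesis forces $K\asymp pq$ and in particular $K\geq p^2/\Lambda$.

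Expanding each Pochhammer as $(p)_\ell=p^\ell\prod_{j=1}^{\ell-1}(1-j/p)$ (and similarly for $q$ and $K$), the $\ell$-th term of~\eqref{eq:CpqKexact} equals $(-1)^{\ell+1}x^\ell R_\ell/\ell!$, with
$$R_\ell=\frac{\prod_{j=1}^{\ell-1}(1-j/p)(1-j/q)}{\prod_{j=0}^{\ell-1}(1-(2j+1)/K)}.$$
I would cut the sum at $L=\lfloor p^{1/3}\rfloor$. For $\ell\leq L$ the bounds $\ell^2/p,\ell^2/q\leq p^{-1/3}$ and $\ell^2/K\leq \Lambda p^{-4/3}$ make the Taylor expansion of $\log R_\ell$ controlled by $O(p^{-1/3})$, giving $R_\ell=1+O(p^{-1/3})$ uniformly. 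For $\ell>L$, the crude bounds $(p)_\ell(q)_\ell\leq(pq)^\ell$ together with $(K)^{(2)}_\ell\geq(K/2)^\ell$ (valid since $\ell\leq K/4$ in our range) bound each term by $(2\Lambda)^\ell/\ell!$; Stirling then shows that $\sum_{\ell>L}(2\Lambda)^\ell/\ell!$ decays super-exponentially in $L$, hence is $o(p^{-1/3})$.

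Splitting the sum at $L$ and using $R_\ell=1+O(p^{-1/3})$ then yields
$$C(p,q,K)=\sum_{\ell=1}^{L}\frac{(-1)^{\ell+1}x^\ell}{\ell!}+O\!\Big(p^{-1/3}\sum_{\ell=1}^{L}\frac{x^\ell}{\ell!}\Big)+o(p^{-1/3})=(1-e^{-x})+O(p^{-1/3}),$$
where the error sum is bounded by $e^\Lambda\cdot O(p^{-1/3})$ and the missing tail of the exponential series is absorbed into the $o(p^{-1/3})$ term. Since $1-e^{-x}\geq 1-e^{-\epsilon}$ is bounded below by a positive constant on the compact range, the additive error converts into a multiplicative factor $(1+O(p^{-1/3}))$, which is the desired conclusion.

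The main obstacle is calibrating the cutoff $L$ so that the per-term correction coming from $R_\ell$ and the tail bound $(2\Lambda)^\ell/\ell!$ balance; the choice $L=p^{1/3}$ gives the advertised $\min(p,q)^{-1/3}$ rate, whereas either a smaller or a larger cutoff would respectively leave an unabsorbable tail or blow up the $R_\ell$ correction. A secondary technical point is making sure that all $O$-constants depend only on $\epsilon$ and $\Lambda$, which is straightforward once one observes that $K\asymp pq$ throughout the compact range.
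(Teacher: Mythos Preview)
Your proof is correct and follows essentially the same route as the paper: both truncate the inclusion-exclusion sum~\eqref{eq:CpqKexact} at $R=\lfloor\min(p,q)^{1/3}\rfloor$, show that each retained term equals the Poisson term $(-1)^{\ell+1}x^\ell/\ell!$ times a factor $1+O(R^{-1})$ (your $R_\ell$ analysis is exactly the paper's computation $(p)_\ell=p^\ell e^{O(\ell^2/p)}$ etc.), and then control the tail. The only cosmetic difference is in the tail bound: the paper invokes the alternating-series property (the error after $R$ terms is at most the $R$-th term, which is $e^{-\Omega(R\log R)}$), whereas you sum absolute values using $(K)^{(2)}_\ell\geq(K/2)^\ell$ to get $\sum_{\ell>R}(2\Lambda)^\ell/\ell!$; both give a super-polynomially small contribution and lead to the same conclusion.
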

\begin{proof}
	Consider the $\ell$-th summand in~\eqref{eq:CpqKexact}, and assume $\ell\leq R:=\lfloor \min(p,q)^{1/3}\rfloor$.
	We have $(p)_\ell = p^\ell \exp(\sum_{i=0}^{\ell-1} \log (1-\frac{i}{p})) = p^\ell e^{O(\ell^2/p)}= p^\ell e^{ O(R^{-1})}$. Similarly, $(q)_\ell=q^\ell e^{ O(R^{-1})}$ and $(K)^{(2)}_{\ell}=K^\ell e^{ O(R^{-1})}$.
Moreover, in the inclusion-exclusion formula, the absolute value of the $\ell$-th summand upper bounds the error between the $\ell$-th partial sum and the sum. Now we have, for $\ell =R$, 
$$
	\left|(-1)^{\ell+1}	 \frac{(p)_\ell (q)_\ell}{(K)^{(2)}_\ell \ell!} \right| = \frac{p^R q^{R}}{K^RR!} e^{O(R^{-1})}= e^{O(R)}/R!e^{O(R^{-1})}= e^{-\Omega(R \log R)}
$$
	where we have used $\frac{pq}{K}\leq \Lambda=O(1)$, and the Stirling formula . We thus have:
	\begin{align}
		C(p,q,K)=
		e^{-\Omega(R\log R)}+ e^{ O( R^{-1})} \sum_{\ell= 1}^{R} \frac{(-1)^{\ell+1} p^\ell q^\ell}{K^\ell \ell!}.
	\end{align}
The last sum differs from the infinite sum 
	$\sum_{\ell= 1}^{\infty} \frac{(-1)^{\ell+1} p^\ell q^\ell}{K^\ell \ell!}=1-e^{-\frac{pq}{K}}$ by at most the first neglected summand (again), which decreases as $e^{-\Omega(R\log R)}$ as already seen above.
	Therefore
	$$
C(p,q,K)=
	e^{-\Omega(R\log R)}+(1-e^{-\frac{pq}{K}}+e^{-\Omega(R \log R)})e^{ O( R^{-1})},
	$$
	and we are done, observing that $(1-e^{-\frac{pq}{K}})$ is uniformly bounded away from $0$.
\end{proof}

We deduce from the lemma and from~\eqref{eq:PpqKestimate2} together with~\eqref{eq:largeCat} that
$$
f(p,q,K) \sim \frac{1}{4\pi} (1-e^{-\frac{pq}{K}}) (\frac{pq}{K})^{-3/2}
$$
uniformly for $A\leq p,q\leq B$ and $\frac{pq}{K}\in [\epsilon,\Lambda]$ (note that $p,q\leq B$ and $pq\geq \epsilon K$ imply that $p,q$ go to infinity, so we can apply the asymptotics~\eqref{eq:largeCat} to the Catalan numbers $C_{p-1}$ and $C_{q-1}$ in~\eqref{eq:PpqKestimate2}, and for the same reason the error term in Lemma~\ref{lemma:boundCpqK} goes to zero).
It follows that Hypothesis (iii) of Theorem~\ref{thm:all-in-1dim} holds, with 
$g(x)=\frac{1}{4 \pi}x^{-3/2}(1-e^{-x})$.

\smallskip
{\noindent \bf (iv) Conclusion.} All hypotheses of Theorem~\ref{thm:all-in-1dim} hold for our choice of $A,B,f$, and we deduce that
$$
K^{3/2}\mathbf{P}(v\in \mathcal{S}_k) \sim \left(\frac{1}{4\pi}\int_0^\infty x^{-3/2}(1-e^{-x})dx \right) K \log K. 
$$
The result follows since the last integral is equal to $2\sqrt{\pi}$, which  can be checked by integrating by parts:
$$
\int_0^\infty x^{-3/2}(1-e^{-x})dx=0 - \int_0^\infty (-2x^{-1/2}) e^{-x} dx = 2\Gamma(\frac{1}{2})=2 \sqrt{\pi}.
$$

\subsection{Second moment}
\label{sec:secondmoment}

In this section we prove the following proposition.
\begin{proposition}\label{prop:secondSk}
One has, when $k\rightarrow\infty$,
	$\mathbf{E}(S_k)^2 \sim \left(\frac{1}{\sqrt{2\pi}}\sqrt{k}\log k\right)^2.$
\end{proposition}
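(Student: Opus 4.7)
The plan is to use the second-moment computation in the same spirit as Section~\ref{sec:firstmoment}, but now for pairs of internal vertices. Writing $S_k = \sum_v \mathbf{1}_{v \in \mathcal{S}_k}$ where the sum runs over the $K-1$ internal vertices of $T_k$, we get
\[
\mathbf{E}(S_k^2) \;=\; \mathbf{E}S_k \;+\; (K-1)(K-2)\,\mathbf{P}(v,w \in \mathcal{S}_k),
\]
where $v,w$ are chosen uniformly at random among ordered pairs of distinct internal vertices. By Proposition~\ref{prop:expectSk} the diagonal term $\mathbf{E}S_k$ is $O(\sqrt{K}\log K)$, which is negligible compared to the target $\frac{1}{2\pi} K (\log K)^2$. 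Therefore it suffices to show the asymptotic independence
\[
\mathbf{P}(v,w \in \mathcal{S}_k) \;\sim\; \mathbf{P}(v \in \mathcal{S}_k)^2.
\]

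To compute the left-hand side I would split according to the relative position of $v$ and $w$ in $T_k$. In the \emph{non-ancestral} case, $T_v$ and $T_w$ are disjoint subtrees; let $p,q$ (resp.\ $s,t$) be the sizes of the left/right subtrees at $v$ (resp.\ $w$), and let the remaining $r = K+2-p-q-s-t$ leaves form the complementary subforest. The analogue of~\eqref{eq:Pexact} then involves five Catalan factors (one per component of the tree minus $\{v,w\}$) together with a combinatorial factor counting how to graft everything back; Figure~\ref{fig:proba1}-Right suggests exactly this type of decomposition is anticipated. In the \emph{ancestral} case, say $w$ lies in $T_v^R$, the sizes to track are $p=|T_v^L|$, $s=|T_w^L|$, $t=|T_w^R|$, and $u = |T_v^R|-s-t$ (the number of leaves of $T_v^R$ outside $T_w$, including the leaf where $w$ hangs from); one again writes down the joint law using Catalan numbers for each of the resulting subtrees.

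The joint probability that $v$ and $w$ both separate a pair, conditional on these sizes and on a uniform random matching on $[K]$, is given by a two-dimensional inclusion–exclusion generalizing~\eqref{eq:CpqKexact}: one counts pairs $(\ell,\ell')$ of distinguished matched pairs crossing $v$ and $w$ respectively. The same argument used in Lemma~\ref{lemma:boundCpqK} (truncating at $\ell,\ell' = O(\min(p,q,s,t)^{1/3})$) yields the uniform asymptotics
\[
C_2(p,q,s,t,K) \;\sim\; (1-e^{-pq/K})(1-e^{-st/K})
\]
on compacts of $(0,\infty)^2$ for the ratios $pq/K, st/K$. Crucially, asymptotically the two events decorrelate because the pair-interactions between the four disjoint regions $T_v^L,T_v^R,T_w^L,T_w^R$ are negligible when all four sizes are $o(K)$.

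To turn these local asymptotics into a global asymptotic equivalent for the four-fold sum, I would state and prove a four-parameter analogue of Theorem~\ref{thm:all-in-1dim}, of the form
\[
\sum_{p,q,s,t \leq K} f(p,q,s,t,K) \;\sim\; K^2 (\log K)^2 \int_{(0,\infty)^2} g(x) g(y)\,dx\,dy,
\]
for functions of product type $f \sim g(pq/K)g(st/K)$ satisfying appropriate domination and boundary bounds; the proof should amount to iterating Lemma~\ref{lemma:divisorSum} in two independent pairs of variables (using a cutoff $A \leq p,q,s,t \leq B$ with $B = K/\log K$ as before), so that the divisor-sum logarithm appears once for each pair. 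Combined with the non-ancestral probability estimate above, this yields exactly $\left(\frac{1}{2\pi} \int_0^\infty x^{-3/2}(1-e^{-x})dx\right)^2 K (\log K)^2 = \frac{1}{2\pi} K (\log K)^2$, matching $(\mathbf{E}S_k)^2$.

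The main obstacle will be twofold. First, extending Theorem~\ref{thm:all-in-1dim} to the product setting requires some care: one must check that all the error terms produced by the various ``bad'' regions (large subtrees, overlapping ranges where $pq/K$ or $st/K$ is very small or very large) remain $o(K (\log K)^2)$ after summing over both pairs; the estimate~\eqref{eq:classicalBoundLargeSubtree} extends, but products of sub-dominant terms and dominant terms must be controlled. Second, the ancestral case must be shown to give a \emph{negligible} contribution (of order at most $K \log K$, not $K (\log K)^2$), since otherwise it would spoil the factorization. Heuristically, the ancestral configurations form a lower-dimensional family because the sizes satisfy a nontrivial nesting constraint, so only one factor of $\log K$ should be produced by divisor-type summation; making this rigorous via the same cutoff/splitting scheme as in Section~\ref{sec:firstmoment} is the technical heart of the argument.
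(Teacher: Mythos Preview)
Your proposal is correct and follows essentially the same route as the paper: split into ancestral and non-ancestral configurations, handle the non-ancestral case via a two-dimensional inclusion--exclusion yielding $C_2\sim(1-e^{-p_1q_1/K})(1-e^{-p_2q_2/K})$ and a four-variable analogue of Theorem~\ref{thm:all-in-1dim}, and show the ancestral case contributes only $O(K^{-1}\log K)$. One simplification the paper uses that you might adopt: for the ancestral case it does \emph{not} track the full subtree sizes $(p,s,t,u)$ at both vertices, but simply drops the condition $w\in\mathcal{S}_k$, reducing the bound to a one-vertex computation identical to the first-moment analysis except that $r\,\Cat{r-1}$ is replaced by the count $D(r)$ of trees with a marked leaf and a marked ancestor of it (so an extra factor $\sqrt{K}$ appears and is compensated by the extra $1/(K-1)$).
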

The proof is similar in spirit to the one of the first moment.
First, we have
$\mathbf{E}(S_k)^2=(K-1)^2\mathbf{P} (v,w\in \mathcal{S}_k)$ where we now take two uniform independent random internal vertices $v,w$ of $T_k$.
We thus want to show that
\begin{align}\label{eq:secondEq}
	\mathbf{P} (v,w\in \mathcal{S}_k)\sim \frac{1}{4\pi}K^{-1}\log^2 K .
\end{align}


\bigskip

{\noindent \bf $\bullet$ Contribution of $v,w$ having an ancestral relation}.

In order to bound $\mathbf{P} (v,w\in \mathcal{S}_k)$, we first consider the contribution of the event 
$H_v:=\{w \mbox{ is an ancestor of } v \mbox{ in }T_k\}$.
We can upper bound the contribution of this event by forgetting the property that $w \in \mathcal{S}_k$, and only take into account the ancestral relation between $w$ and $v$, and the fact that $v\in \mathcal{S}_k$. 
We can treat this exactly as we treated~\eqref{eq:EvExplicit}, and we get:
	\begin{align}\label{eq:EvExplicitTilde}
\mathbf{P} (v,w\in \mathcal{S}_k, H_v)
		\leq \mathbf{E} [C(|T_v^R|, |T_v^L|, K) \mathbf{1}_{H_v} ]
		=\sum_{p+q+r=K+1\atop p,q,r \geq 1}\tilde{{P}}(p,q,K)   C(p,q,K),
	\end{align}
	where 
	\begin{align}
		\tilde{{P}}(p,q,K)&=
		\frac{1}{(K-1)^2\Cat{K-1}}\Cat{p-1} \Cat{q-1} D(K-p-q) \label{eq:PexactTilde},
	\end{align}
where $D(r)$ is the number of planted plane trees with $r$ leaves, one marked leaf, and a marked vertex on the ancestral line of that leaf: see Figure~\ref{fig:proba1}-right.
Note that, compared to~\eqref{eq:EvExplicit} the only change was to replace $r\Cat{r}$ by $D(r)/(K-1)$.
Now, it is classical, and easy to see, that $D(r) \sim cr^{3/2} \Cat{r}$ for some $c>0$. The whole estimates of the previous sections can then be copied verbatim, with the only difference that an extra factor $\sqrt{K}$ will appear in the calculations in places where we had to estimate a summand of the form $\Cat{K-p-q}/\Cat{K}$ in the sums, and we now have $D(K-p-q)/\mathrm{Cat}(K)$. Once compensated by the extra $(K-1)$ in the denominator, we obtain that 
the three hypotheses of Theorem~\ref{thm:all-in-1dim} hold as in the previous section, with now
$$
f(p,q,K) = K^2 \tilde{{P}}(p,q,K) C(p,q,K).
$$
It follows that
\begin{align}\label{eq:negligible}
	\mathbf{P} (v,w\in \mathcal{S}_k, H_v) = O(K^{-1}\log K). 
\end{align}
Note that this is smaller than the order $K^{-1}\log^2 K$ that we aim to reach in~\eqref{eq:secondEq} (so this contribution will be negligible in the end).
Note also that, by symmetry, the same bound holds if we replace $H_v$ by the symmetric event $H_w$ in which the roles of $v$ and $w$ are reversed.

\bigskip

{\noindent \bf $\bullet$ Contribution of $v,w$ in generic position.}
We now study the contribution of cases where neither $v$ nor $w$ is an ancestor of the other, i.e. we work under the event $H_v^c \cap H_w^c$.

The probability $\mathbf{P} (v,w\in \mathcal{S}_k, H_v^c \cap H_w^c)$ can be written explicitly as
	\begin{align}\label{eq:EvwExplicit1}
	\mathbf{P} (v,w\in \mathcal{S}_k, H_v^c \cap H_w^c)=	\sum_{{p_1+q_1+p_2+q_2+r=K+2\atop p_1,q_1,p_2,q_2,r \geq 1}}
		P(p_1,q_1,p_2,q_2,K)   C(p_1,q_1,p_2,q_2,K),
	\end{align}
	where 
$C(p_1,q_1,p_2,q_2,K)$ is the probability that for a random matching on $[K]$ there is at least one pair matched between $[p_1]$ and $p_1+[q_1]$, and at least one between $p_1+q_1+[p_2]$ and $p_1+q_1+p_2+[q_2]$, and where
\begin{align}
P(p_1,q_1,p_2,q_2,K)
&=\mathbf{P}(|T_v^R|=p_1,|T_v^L|=q_1,|T_w^R|=p_2,|T_w^L|=q_2)\\
&=\frac{\Cat{K+1-\sum p_i - \sum q_i}(K+2-\sum p_i - \sum q_i)_2}{(K-1)^2\Cat{K-1}}\prod_{x= p_1,q_1,p_2,q_2}\Cat{x-1}, \label{eq:Pexact22}
\end{align}
where again Catalan numbers come from the decomposition into subtrees at $v$ and $w$, see Figure~\ref{fig:probaHat}-Left.
Moreover, we have similarly as before, by inclusion-exclusion,
	\begin{align}\label{eq:IE2}
	C(p_1,q_1,p_2,q_2,K)=\sum_{\ell_1, \ell_2 \geq 1} (-1)^{\ell_1+\ell_2} \frac{(K-2\ell_1-2\ell_2-1)!!}{(K-1)!!}\prod_{i=1}^2 {p_i \choose \ell_i} {q_i \choose \ell_i} \ell_i! 
	= \sum_{\ell_1, \ell_2 \geq 1} \frac{(-1)^{\ell_1+\ell_2}\prod_{i=1}^2 (p_i)_{\ell_i} (q_i)_{\ell_i}}{(K)^{(2)}_{\ell_1+\ell_2} \ell_1! \ell_2!},
	\end{align}
since the summand of index $\ell_1,\ell_2$, without sign, counts matchings on $[K]$ having respectively $\ell_1$ and $\ell_2$ distinguished matched pairs between the two pairs of intervals $([p_1],p_1+[q_1])$ and $(p_1+q_1+[p_2],p_1+q_1+p_2+[q_1])$, divided by the total number $(K-1)!!$ of matchings.

We now consider the function
$$
f(p_1,q_1,p_2,q_2,K) := K^{3}P(p_1,q_1,p_2,q_2,K)   C(p_1,q_1,p_2,q_2,K).
$$
We will apply to this function the 2-dimensional analogue of Theorem~\ref{thm:all-in-1dim} (Theorem~\ref{thm:all-in-2dim} below), which requires to check three similar hypotheses, studied below in steps (i), (ii), (iii).

\smallskip

{\noindent \bf (i) Contribution of $|T_v|>B$}.
We first upper bound the contribution of the event $|T_w|>B$ to~\eqref{eq:EvwExplicit1}, for the same value $B=K/\log K$ as in the previous section.
To bound the corresponding probability, we once again forget the condition that $w\in \mathcal{S}_k$, and we only estimate 
$$
\mathbf{P} (v\in \mathcal{S}_k, H_v^c \cap H_w^c, |T_w|>B)
=
\mathbf{E}[C(|T_v^R|,|T_v^L|,K) \mathbf{1}_{H_v^c \cap H_w ^c} \mathbf{1}_{|T_w|>B}],$$ 
which is given by
	\begin{align}\label{eq:EvwExplicit}
		\sum_{{p_1+q_1+s+r=K+2\atop p_1,q_1,s,r \geq 1} \atop s>B}
		\hat{P}(p_1,q_1,s,K)   C(p_1,q_1,K),
	\end{align}
	where
	\begin{align}
		\hat{P}(p_1,q_1,s,K)=
		\frac{\Cat{K+1-p_1-q_1-s}(K+2-p_1-q_1-s)_2}{(K-1)^2\Cat{K-1}}\prod_{x= p_1,q_1,s}\Cat{x-1}, \label{eq:Pexactint}
	\end{align}
	where the Catalan numbers again come from the decomposition into subtrees at $v$ and and $w$, see Figure~\ref{fig:probaHat}-Right.

Now one has, uniformly over $s>B$ and $r\geq 1$, 
$\frac{r \Cat{r-1}\Cat{s-1} }{(r+s)\Cat{r+s-1}} = O(s^{-3/2}(1-\frac{s}{s+r})^{-1/2})$,
by~\eqref{eq:largeCat}, and therefore, uniformly over $y\geq B$,
$$\sum_{{r+s=y}\atop s>B}
\frac{r \Cat{r-1}\Cat{s-1} }{(r+s)\Cat{r+s-1}}=
\sum_{{r+s=y}\atop s>B} s^{-3/2}(1-\frac{s}{s+r})^{-1/2} = O(B^{-1/2}),
$$
by the same calculation as in~\eqref{eq:classicalBoundLargeSubtree}.
We use this observation to substitute the factor $r\Cat{r-1}\Cat{s-1}$ in~\eqref{eq:Pexactint} by $yC_{y-1}$, with $r=K+2-p_1-q_1-s$ and $y=r+s=K+2-p_1-q_1$, and we deduce that
\begin{align}\label{eq:EvwBound1}
\eqref{eq:EvwExplicit}	
	\leq \sum_{{p_1+q_1+y=K+2\atop p_1,q_1,y \geq 1}}
	O(B^{-1/2}) \times
	 C(p_1,q_1,K)\frac{\Cat{K+1-p_1-q_1}(K+2-p_1-q_1)(K-p_1-q_1)}{(K-1)^2\Cat{K-1}}\prod_{x= p_1,q_1}\Cat{x-1},
\end{align}
which is at most $O(B^{-1/2})$ times \eqref{eq:EvExplicit}. Since we have shown in the previous section that the latter is $O(K^{-1/2}\log K)$, it follows that
\begin{align}
	\eqref{eq:EvwExplicit}	 = O(K^{-1} \log^{3/2} K)= o(K^{-1} \log^{2} K).
\end{align}
The same bound holds for the contribution of the stronger event $|T_w^R|>B$, and symmetrically of the events $|T_w^L|>B$, $|T_v^R|>B$, and  $|T_v^L|>B$.

\begin{figure}
	\centering
	\includegraphics[height=30mm]{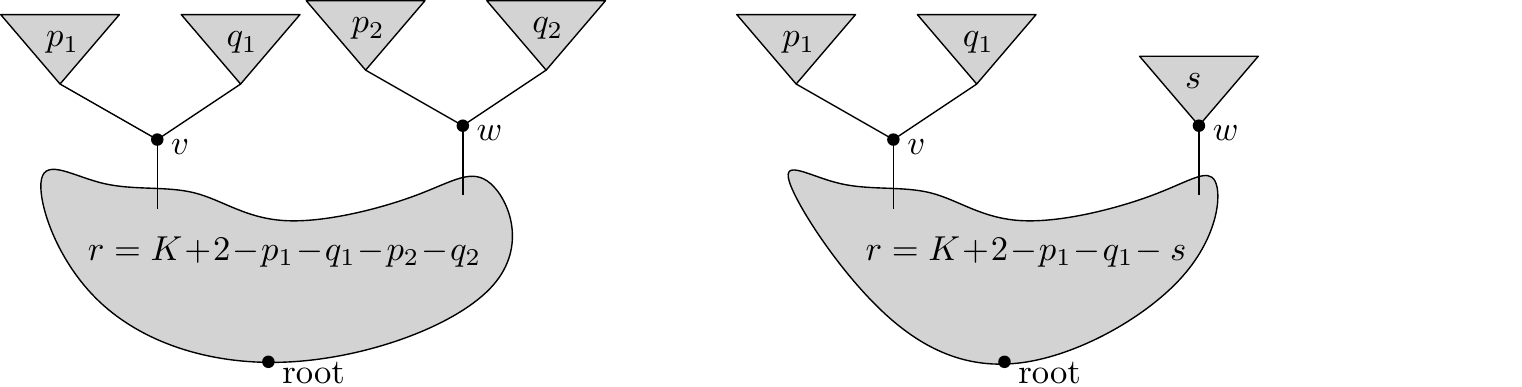}
	\caption{Left: Origin of the Catalan numbers in~\eqref{eq:Pexact22}. 
	Right: the variant used in~\eqref{eq:Pexactint}.
	}
	\label{fig:probaHat}
\end{figure}

\smallskip

{\noindent \bf (ii) Domination bound for $f$}.

We have, uniformly over $p_1,q_1,p_2,q_2\leq B$, that $P(p_1,q_1,p_2,q_2,K)=O((p_1q_1)^{-3/2}(p_2q_2)^{-3/2})$, by~\eqref{eq:largeCat}. Moreover, since the first term in the inclusion-exclusion formula upper bounds the total sum, we also have
$$C(p_1,q_1,p_2,q_2,K) \leq \frac{p_1q_1p_2q_2}{(K-1)(K-3)} = O(\frac{p_1q_1p_2q_2}{K^2}),$$
and moreover $C(p_1,q_1,p_2,q_2,K) \leq C(p_1,q_1,K) =O(\frac{p_1q_1}{K})$, as we have seen in the previous section. 
The quantity $C(p_1,q_1,p_2,q_2,K)$ is also bounded symmetrically by $O(\frac{p_2q_2}{K})$, and of course by $1$.
Putting all these upper bounds in one we have that, uniformly over $p_1,q_1,p_2,q_2\leq B$ we have, for some constant $C>0$,
\begin{align}\label{eq:boundf2}
	|f(p_1,q_1,p_2,q_2,K)| \leq C \prod_{i=1}^{2} \min\left(\left(\frac{p_iq_i}{K}\right)^{-1/2},\left( \frac{p_iq_i}{K}\right)^{-3/2}\right).\end{align}

\smallskip

{\noindent \bf (iii) Convergence on compact sets.}
We have the following analogue of Lemma~\ref{lemma:boundCpqK}.
\begin{lemma}\label{lemma:boundCpqK2}
	Let $p_1,q_1,p_2,q_2\in [K]$. Then we have, uniformly for $\epsilon \leq \frac{p_1q_1}{K}, \frac{p_2q_2}{K}\leq \Lambda$,
	$$
	C(p_1,q_1,p_2,q_2,K) \sim (1-e^{-\frac{p_1q_1}{K}})(1-e^{-\frac{p_2q_2}{K}})(1+O(\min(p_1,q_1,p_2,q_2)^{-1/3}).
	$$
\end{lemma}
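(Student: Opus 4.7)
The plan follows the strategy of Lemma~\ref{lemma:boundCpqK}, the only new point being that, once the inclusion-exclusion~\eqref{eq:IE2} is truncated and the entries are locally linearised, the remaining finite double sum factorises as a product of two one-variable sums, each converging to the corresponding factor in the statement. The factorisation relies on the elementary identity $(-1)^{\ell_1+\ell_2}=(-1)^{\ell_1}(-1)^{\ell_2}$, which lets the $\ell_1$- and $\ell_2$-indices separate after the summand has been linearised.

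Set $R:=\lfloor \min(p_1,q_1,p_2,q_2)^{1/3}\rfloor$; we may assume $R$ is large, since otherwise the statement is trivial as $|C|\leq 1$ and $(1-e^{-x_i})\geq 1-e^{-\epsilon}$ is bounded away from $0$. Write $x_i:=p_iq_i/K\in[\epsilon,\Lambda]$. The local computations of the proof of Lemma~\ref{lemma:boundCpqK} extend directly to give, uniformly for $1\leq \ell_1,\ell_2\leq R$, that $(p_i)_{\ell_i}=p_i^{\ell_i}e^{O(R^{-1})}$ and $(q_i)_{\ell_i}=q_i^{\ell_i}e^{O(R^{-1})}$; moreover $p_iq_i\leq \Lambda K$ combined with $p_i,q_i\geq R^3$ forces $K\geq R^6/\Lambda$, whence $(K)^{(2)}_{\ell_1+\ell_2}=K^{\ell_1+\ell_2}e^{O(R^{-1})}$ likewise. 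The main part of~\eqref{eq:IE2} (with $1\leq \ell_1,\ell_2\leq R$) thus equals
\begin{equation*}
e^{O(R^{-1})}\sum_{\ell_1=1}^R\sum_{\ell_2=1}^R\frac{(-x_1)^{\ell_1}}{\ell_1!}\frac{(-x_2)^{\ell_2}}{\ell_2!}=e^{O(R^{-1})}\prod_{i=1,2}\left(\sum_{\ell=1}^R\frac{(-x_i)^\ell}{\ell!}\right),
\end{equation*}
and each factor differs from $e^{-x_i}-1$ by at most $x_i^{R+1}/(R+1)!=e^{-\Omega(R\log R)}$ by the standard Taylor remainder bound for $R$ large compared to $\Lambda$.

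To bound the tail $\{\ell_1>R\text{ or }\ell_2>R\}$ of~\eqref{eq:IE2}, I would use $(p_i)_{\ell_i}(q_i)_{\ell_i}\leq (p_iq_i)^{\ell_i}$ and, for $\ell_1+\ell_2\leq K/4$, the crude lower bound $(K)^{(2)}_{\ell_1+\ell_2}\geq (K/2)^{\ell_1+\ell_2}$, so that each summand is bounded by $(2\Lambda)^{\ell_1+\ell_2}/(\ell_1!\ell_2!)$; standard Stirling estimates then give a total of $e^{-\Omega(R\log R)}$ on this region. Summands with $\ell_1+\ell_2>K/4$ either vanish (since $(p_i)_{\ell_i}=0$ as soon as $\ell_i$ exceeds $\min(p_i,q_i)$) or are negligible by direct inspection. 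Combining with the main-part estimate and using that $(1-e^{-x_i})\geq 1-e^{-\epsilon}$ is bounded away from $0$, the additive error $e^{-\Omega(R\log R)}$ gets absorbed into the multiplicative factor $(1+O(R^{-1}))=(1+O(\min(p_1,q_1,p_2,q_2)^{-1/3}))$, yielding the claim.

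The core of the argument is essentially the one-dimensional proof run twice, so no new idea is needed; the only real obstacle I anticipate is careful uniform bookkeeping of the error terms across the four-dimensional range of parameters, and confirming that the various approximation regimes glue together cleanly once the factorisation has been used.
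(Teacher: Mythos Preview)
Your proposal is correct and follows essentially the same approach as the paper, which merely says ``the summand is approximated by $\prod_{i=1}^2 \frac{1}{\ell_i!}(-p_iq_i/K)^{\ell_i}$ exactly as in the proof of Lemma~\ref{lemma:boundCpqK}'' and then displays the factorised double sum. You have simply written out the details the paper omits, including the direct tail bound via $(p_i)_{\ell_i}(q_i)_{\ell_i}\leq(p_iq_i)^{\ell_i}$ and $(K)^{(2)}_{\ell_1+\ell_2}\geq(K/2)^{\ell_1+\ell_2}$, which is arguably cleaner in two dimensions than invoking the alternating-series truncation used in the one-dimensional lemma.
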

\begin{proof}
	Consider~\eqref{eq:IE2}.	The summand is approximated by $\prod_{i=1}^2 \frac{1}{\ell_i!}\left(-\frac{p_iq_i}{K}\right)^{\ell_i}$ exactly as in the proof of Lemma~\ref{lemma:boundCpqK} (details are completely similar and are left to the reader), leading in fine the approximation by the double infinite sum
	$$
	\sum_{\ell_1, \ell_2 \geq 1} \prod_{i=1}^2 \frac{1}{\ell_i!}\left(-\frac{p_iq_i}{K}\right)^{\ell_i}
	= (1-e^{-\frac{p_1q_1}{K}})(1-e^{-\frac{p_2q_2}{K}}). \qedhere
	$$
\end{proof}
If follows that, uniformly over $\epsilon \leq \frac{p_1q_1}{K}, \frac{p_2q_2}{K}\leq \Lambda$ with $p_1,q_1,p_2,q_2\leq B$, we have
$$
f(p_1,q_1,p_2,q_2,K) \sim \frac{1}{16\pi^2} \prod_{i=1}^2 (\frac{p_iq_i}{K})^{-3/2}(1-e^{-\frac{p_iq_i}{K}}),
$$
where we have only taken the asymptotic of Catalan numbers in~\eqref{eq:Pexact22} (as in the previous section, we observe that $p_i,q_i\leq B$ and $\frac{p_iq_i}{K}\geq \epsilon K$ imply that $p_i,q_i$ go to infinity).

\smallskip
{\noindent \bf (iv) Conclusion.}
We will need the two-dimensional analogue of Theorem~\ref{thm:all-in-1dim}.
\begin{theorem}\label{thm:all-in-2dim}
	Let $f:(\mathbb{Z}_{>0})^5 \longrightarrow \mathbb{R}$.
	 Assume that
	there exist functions $A=A(K),B=B(K)$ such that $1\leq A<<B\leq K$ and moreover $\log B \sim \log K$ and $\log A = o(\log K)$, such that
\begin{itemize}
	\item[(i)]
		$\displaystyle \sum_{{1\leq p_1,q_1,p_2,q_2\leq K}\atop \max(p_1,q_1,p_2,q_2)>B} f(p_1,q_1,,p_1,q_2,K) = o( K^2 \log^2 K)$
	\item[(ii)] 
There is a constant $C>0$ such that 
for $p_1,q_1,p_2,q_2\leq B$, one has
		$$
		|f(p_1,q_1,p_2,q_2,K)| \leq C \prod_{i=1}^{2} \min\left(\left(\frac{p_iq_i}{K}\right)^{-1/2},\left( \frac{p_iq_i}{K}\right)^{-3/2}\right),
		$$
	\item[(iii)]
		There exists an integrable function $g:(\mathbb{R}_{>0})^2\rightarrow \mathbb{R}_{>0}$ such that
		$$f(p_1,q_1,p_2,q_2,K)\sim g(\frac{p_1q_1}{K},\frac{p_2q_2}{K}),$$
		uniformly for $A\leq p_1,q_1,p_2,q_2 \leq B$ and $(\frac{p_1q_1}{K},\frac{p_2q_2}{K})$ in any compact subset of $(\mathbf{R}_{>0})^2$.
\end{itemize}
Then one has, when $K$ goes to infinity,
$$
	\sum_{1\leq p_1,q_1,p_2,q_2\leq K} f(p_1,q_1,p_2,q_2,K) \sim K^2 \log^2 K \int_{0}^\infty\int_{0}^\infty g.
$$

\end{theorem}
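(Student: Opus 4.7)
The plan is to follow the proof of Theorem~\ref{thm:all-in-1dim} line by line, exploiting the fact that the domination bound (ii) factorizes as $|f|\le C\,\phi(u_1)\,\phi(u_2)$ with $u_i=p_iq_i/K$ and $\phi(u)=\min(u^{-1/2},u^{-3/2})$. This factorization reduces every tail estimate to a product of one-dimensional tail estimates already carried out in the proof of Theorem~\ref{thm:all-in-1dim}.

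First, I would establish a two-dimensional analogue of Lemma~\ref{lemma:divisorSum}: for continuous $g:[\epsilon,\Lambda]^2\to\mathbb{R}_{>0}$,
$$
\sum_{\substack{A\leq p_i,q_i\leq B\\ \epsilon K\leq p_iq_i\leq \Lambda K}} g\!\left(\tfrac{p_1q_1}{K},\tfrac{p_2q_2}{K}\right)\sim K^2\log^2 K \int_\epsilon^\Lambda\!\!\int_\epsilon^\Lambda g.
$$
The proof mimics that of Lemma~\ref{lemma:divisorSum} in two dimensions: partition $[\epsilon,\Lambda]^2$ into $\ell^2$ rectangles of side $\delta=(\Lambda-\epsilon)/\ell$, approximate $g$ by its corner value on each rectangle (error $r_\delta\to 0$ by uniform continuity), and invoke equation~\eqref{eq:averageDivisor} separately for each coordinate to evaluate the count of quadruples with $p_iq_i$ in a prescribed rectangle. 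Expanding the product of the two one-dimensional counts yields a main term $(1+o(1))\delta^2 K^2\log^2 K$ together with cross terms whose sum over the $\ell^2$ rectangles remains $o(K^2\log^2 K)$ provided $\ell=\ell(K)\to\infty$ is chosen slowly enough, while the main terms aggregate to the Riemann sum of $g$.

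For the error regions, the factorization of (ii) lets us bound any sum restricting only one pair, say $(p_1,q_1)$, to a bad set by the corresponding one-dimensional bad-region estimate times the unrestricted sum $\sum_{p_2,q_2\le B}\phi(p_2q_2/K)=O(K\log K)$ already computed in the proof of Theorem~\ref{thm:all-in-1dim}. Applied in each coordinate, the four one-dimensional tail estimates of that proof (contributions of $u_i>\Lambda$, $u_i<\epsilon$, $p_i<A$, $q_i<A$) yield a total error at most $K^2\log^2 K$ times $O(\sqrt{\epsilon}+\Lambda^{-1/2})+o(1)$, while the region $\max(p_1,q_1,p_2,q_2)>B$ is handled directly by hypothesis~(i).

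Collecting everything and writing $H([K]^4)=\sum_{1\le p_i,q_i\le K}f$, every accumulation point of $H([K]^4)/(K^2\log^2 K)$ equals $\int_\epsilon^\Lambda\!\!\int_\epsilon^\Lambda g+O(\sqrt{\epsilon}+\Lambda^{-1/2})$; letting $\epsilon\to 0$ and $\Lambda\to\infty$ gives the unique limit $\int_0^\infty\!\!\int_0^\infty g$. The main obstacle is the bookkeeping in the 2-dim mesh lemma: squaring the 1-dim count $\sigma(I_i)=(1+o(1))\delta K\log K+o(K\log K)$ produces cross terms whose $\ell^2$-fold sum must be controlled, forcing a coupled choice of $\ell$ and $\delta$ as functions of $K$. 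Everything else is a routine adaptation of the one-dimensional arguments.
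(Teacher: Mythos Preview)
Your proposal is correct and follows essentially the same route as the paper: both reduce the two-dimensional estimate to products of the one-dimensional estimates from the proof of Theorem~\ref{thm:all-in-1dim}, using the factorization of the domination bound~(ii) to handle the tail regions (one bad factor times the full sum $\sum_{p_2,q_2\le B}\phi(p_2q_2/K)=O(K\log K)$), and squaring the box estimate~\eqref{eq:averageDivisor} to obtain the two-dimensional analogue of Lemma~\ref{lemma:divisorSum}. You have in fact been more explicit than the paper about the cross-term bookkeeping in the squared box count, which the paper handles in a single line.
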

\begin{proof}
	The proof is similar to the proof of Theorem~\ref{thm:all-in-1dim}.

	The contribution of $\max(p_1,q_1,p_2,q_2)>B$ is taken into account by (i).
	Among remaining terms, the contribution of $(\frac{p_1q_1}{K},\frac{p_2,q_2}{K})$ not in a compact of the form $[\epsilon,\Lambda]^2$ can be upper bounded by (ii), by
	\begin{align*}
		2 &\left(\sum_{{\frac{p_1q_1}{K}\not \in [\epsilon ,\Lambda]} \atop p_1,q_1\leq B}  
\min\left(\left(\frac{p_1q_1}{K}\right)^{-1/2},\left( \frac{p_1q_1}{K}\right)^{-3/2}\right)
	\right)\times 
		\left(	\sum_{p_2,q_2\leq B}  
		\min\left(\left(\frac{p_2q_2}{K}\right)^{-1/2},\left( \frac{p_2q_2}{K}\right)^{-3/2}\right)\right)\\
		&= O(M) \times O(K \log K),
	\end{align*}
with
$
	M= K \left( O(1)+ O(\sqrt{\epsilon} \log K) + O(\Lambda^{-1/2} \log \Lambda K )+ (1+\Lambda^{1/2}) o(\log K)\right).
$
This follows by applying the estimates in the proof of Theorem~\ref{thm:all-in-1dim} to each of the two sums.

	The contribution of $(\frac{p_1q_1}{K},\frac{p_2,q_2}{K})\in[\epsilon,\Lambda]^2$ is analyzed exactly as in the proof of Lemma~\ref{lemma:divisorSum}, by cutting the integration space into small 2-dimensional boxes  and estimating the average of the modified divisor function on each of them. This requires no extra work as in fact all the 1-dimensional estimates in the proof of Lemma~\ref{lemma:divisorSum} can be recycled.
Namely, write again $\sigma(j)=|\{(p,q),A\leq p,q\leq B, pq=j\}|$ and for an integer $\ell \geq 1$, let  $\delta= (\Lambda-\epsilon)/\ell$, and $I_i=[a_{i-1},a_{i})$ and $a_i=(\epsilon+i\delta)K$. For $i,i'\in [\ell]$, we have 
	\begin{align*}
		\sum_{(j,j')\in I_i\times I_{i'}} \sigma(j) \sigma(j') = 
		\sum_{j\in I_i} \sigma(j) \times \sum_{j'\in I_{i'}}\sigma(j')
	\end{align*}
which is 
	$(1+o(1))\delta^2 K^2 \log^2 K +o((K + \delta) K \log^2 K)$ by squaring~\eqref{eq:averageDivisor}.
	One can then use the absolute continuity of $g$ on $[\epsilon,\Lambda]^2$, and take the limit $\delta\rightarrow 0$ as in the proof of Lemma~\ref{lemma:divisorSum}, to conclude that the contribution of $(\frac{p_1q_1}{K},\frac{p_2,q_2}{K})\in[\epsilon,\Lambda]^2$, with $p_1,q_1,p_2,q_2$ is equal to 
	$$ (1+o(1))K^2\log^2K \int_{[\epsilon,\Lambda]^2}g.
	$$

	One then concludes by taking the limit $\epsilon\rightarrow 0$, $\Lambda\rightarrow \infty$ as in the end of the proof of Theorem~\ref{thm:all-in-1dim}. 
\end{proof}

We have already shown, in points (i), (ii), (iii) above,  that our function $f$ satisfies the three hypotheses, with $B=K/\log K$, $A=1$,
and $g(x,y)=\frac{1}{16\pi^2} (xy)^{-3/2} (1-e^{-x})(1-e^{-y})$. 
We conclude that 
$$
K^3 \mathbf{P} (v,w\in \mathcal{S}_k, H_v^c \cap H_w^c)
\sim K^{2} \log ^2 K \times 
\frac{1}{16 \pi^2} \left(\int_0^\infty x^{-3/2}(1-e^{-x}) dx\right)^2
=\frac{1}{4 \pi}K^{2} \log ^2 K,
$$
which is exactly what we need.
This ends the proof of Proposition~\ref{prop:secondSk}.

\section{Remaining proofs}
\label{sec:remaining}

	We conclude by proving Corollary~\ref{cor:asymptMoments}. 
\begin{proof}[Proof of Corollary~\ref{cor:asymptMoments}]
	Write
	$\mathbf{E}p^{S_k} \leq p^{c\sqrt{k}} +  \mathbf{P} (S_k < c\sqrt{k})$ for $c<\frac{2}{\sqrt{\pi}}$, and apply~\eqref{eq:lowerdev}. This implies $\mathbf{E}p^{S_k} \leq e^{-c\sqrt{k}}$ for some $c>0$.
	Moreover, since $\mathbf{E}S_k=O(\sqrt{k}\log k)$ by Proposition~\ref{prop:expectSk}, we have $\mathbf{E}p^{S_k} \geq e^{-c'\sqrt{k}\log k}$.
This proves~\eqref{eq:asympt}.

	To obtain~\eqref{eq:asympt2}, use the Markov inequality for the $k$-th moment:
	$$
	\mathbf{P}(\Lambda_p>1-\epsilon)= \mathbf{P}(\Lambda_p^k>(1-\epsilon)^k) \leq (1-\epsilon)^{-k} a_k(p)\leq \exp\big(-c'\sqrt{k} -k \log(1-\epsilon)\big),
	$$
	where we used the right inequality in~\eqref{eq:asympt}. Taking $k=c \epsilon^{-2}$ for small enough $c$ gives 
	$\mathbf{P}(\Lambda_p>1-\epsilon) \leq \exp\big(-c''\epsilon^{-1}\big)$.

\end{proof}

\bibliographystyle{alpha}
\bibliography{biblio}

\end{document}